\newcommand{\ds}{\displaystyle}
\theoremstyle{plain}
\newtheorem{theorem}{Theorem}[section]
\newtheorem{remark}[theorem]{Remark}
\newtheorem{lemma}[theorem]{Lemma}
\newtheorem{proposition}[theorem]{Proposition}
\newtheorem{corollary}[theorem]{Corollary}
\def \[{\begin{equation}}
\def \]{\end{equation}}
\newif \ifLastSection \LastSectionfalse
\numberwithin{equation}{section}
\newcommand{\R}{{\mathbb R}}
\begin{document}
\title{   On the concentration phenomenon of $L^2$-subcritical constrained minimizers for a class of Kirchhoff equations with potentials\thanks{a: Partially supported by NSFC NO: 11501428, NSFC NO: 11371159.}}
\author{Gongbao Li,~~~~~~Hongyu Ye \thanks{b:
E-mail address: ligb@mail.ccnu.edu.cn, yyeehongyu@163.com}  \\ \small { School of Mathematics and
Statistics, Central China  Normal University,}\\
\small {  Wuhan, 430079, P. R. China}\\
\small {College of Science, Wuhan University of Science and Technology,}
\\
\small{ Wuhan 430065, P. R. China}}
\date{}
\maketitle

\begin{abstract}
In this paper, we study the existence and the concentration behavior of minimizers for $i_V(c)=\inf\limits_{u\in S_c}I_V(u)$, here $S_c=\{u\in H^1(\R^N)|~\int_{\R^N}V(x)|u|^2<+\infty,~|u|_2=c>0\}$ and
 $$I_V(u)=\frac{1}{2}\ds\int_{\R^N}(a|\nabla u|^2+V(x)|u|^2)+\frac{b}{4}\left(\ds\int_{\R^N}|\nabla u|^2\right)^2-\frac{1}{p}\ds\int_{\R^N}|u|^{p},$$
where $N=1,2,3$ and $a,b>0$ are constants. By the Gagliardo-Nirenberg inequality, we get the sharp existence of global constraint minimizers for $2<p<2^*$ when $V(x)\geq0$, $V(x)\in L^{\infty}_{loc}(\R^N)$ and $\lim\limits_{|x|\rightarrow+\infty}V(x)=+\infty$. For the case $p\in(2,\frac{2N+8}{N})\backslash\{4\}$, we prove the global constraint minimizers $u_c$ behave like
 $$
u_{c}(x)\approx \frac{c}{|Q_{p}|_2}\left(\frac{m_{c}}{c}\right)^{\frac{N}{2}}Q_p\left(\frac{m_{c}}{c}x-z_c\right).$$
for some $z_c\in\R^N$ when $c$ is large, where $Q_p$ is up to translations, the unique positive solution of $-\frac{N(p-2)}{4}\Delta Q_p+\frac{2N-p(N-2)}{4}Q_p=|Q_p|^{p-2}Q_p$ in $\R^N$ and $m_c=(\frac{\sqrt{a^2D_1^2-4bD_2i_0(c)}+aD_1}{2bD_2})^{\frac12}$,
$D_1=\frac{Np-2N-4}{2N(p-2)}$ and $D_2=\frac{2N+8-Np}{4N(p-2)}$.

\noindent{\bf Keywords:} Kirchhoff equation; Mass concentration; Constrained minimization; Normalized solutions; Sharp existence
\\
\noindent{\bf Mathematics Subject Classification(2010):} 35J60, 35A20\\
\end{abstract}

\section{ Introduction and main result}
In this paper, we study the existence and the concentration phenomenon of normalized solutions to the following Kirchhoff equation
\begin{equation}\label{1.1}
    -\left(a+b\ds\int_{\R^N}|\nabla u|^2\right)\Delta u+(V(x)-\rho)u-|u|^{p-2}u=0,~~~x\in \R^N,~\rho\in\R,
\end{equation}
where $N\leq3$, $a,$ $b>0$ are constants and $2<p<2^*:=\frac{2N}{N-2},$ $2^*=6$ if $N=3$ and $2^*=+\infty$ if $N=1,2$. The potential  $V:\R^N\rightarrow\R$ is a suitable function.

In the past years, equation \eqref{1.1}, which is a nonlocal
one as the appearance of the term  $\int_{\R^N}|\nabla u|^2$, has attracted a lot of attention. \eqref{1.1} is no longer a pointwise identity, which causes some
mathematical difficulties and makes the study
particularly interesting. see \cite{ac,ds,ap1,b,cd,k,l,p} and the references therein. The first line to study \eqref{1.1} is to consider the case where $\rho$ is a fixed and assigned parameter. see e.g. \cite{hz,jw,ly1,ly2,lls,wt}. In such direction, the critical point theory is used to look for nontrivial solutions, however, nothing can be given a priori on the $L^2$-norm of the solutions. Recently, since the physicists are often interested in ``normalized solutions", solutions with prescribed $L^2$-norm are considered. A solution with $|u|_2=c$ corresponds to a critical point of the following $C^1$-functional
\begin{equation}\label{1.2}
I_{V}(u)=\ds\frac{1}{2}\ds\int_{\R^N}(a|\nabla u|^2+V(x)|u|^2)+\frac{b}{4}\left(\ds\int_{\R^N}|\nabla u|^2\right)^2
-\frac{1}{p}\ds\int_{\R^N}|u|^{p},~~~~u\in \mathcal{H},
\end{equation}
constrained on the $L^2$-sphere
$$S_c=\{u\in \mathcal{H}|~|u|_{2}=c>0\}$$
in $\mathcal{H}$, where $\mathcal{H}$ is defined as
$$\mathcal{H}=\left\{u\in H^1(\R^N)|~\int_{\R^N}V(x)|u|^2<+\infty\right\}$$
with the associated norm $\|u\|_{\mathcal{H}}=[\int_{\R^N}(|\nabla u|^2+|u|^2+V(x)|u|^2]^{\frac12}$.
Set
\begin{equation}\label{1.3}
i_{V}(c):=\inf\limits_{u\in S_c}I_V(u).
\end{equation}
We see that minimizers of $i_{V}(c)$ are critical points of $I_{V}|_{S_c}$ and the parameter $\rho$ is no longer fixed but appears as an associated Lagrange multiplier. If $V(x)\equiv0$, then the minimization problem \eqref{1.3} can be rewritten as
\begin{equation}\label{1.8}
i_0(c)=\inf\limits_{u\in \widetilde{S}_c}I_0(u),
\end{equation}
where $\widetilde{S}_c=\{u\in H^1(\R^N)|~|u|_2=c\}$ and
$$I_{0}(u)=\ds\frac{a}{2}\ds\int_{\R^N}|\nabla u|^2+\frac{b}{4}\left(\ds\int_{\R^N}|\nabla u|^2\right)^2
-\frac{1}{p}\ds\int_{\R^N}|u|^{p},~~~~~~~u\in H^1(\R^N).$$
When $b=0$, equation \eqref{1.1} does not depend on the nonlocal term any more, i.e. it becomes the following typical Schr\"{o}dinger equation (for simplicity, we may assume that $a\equiv1$)
\begin{equation}\label{1.4}
-\Delta u+(V(x)-\rho)u=|u|^{p-2}u,~~~x\in\R^N,~\rho\in\R,~N\geq1.
\end{equation}
In the literature, there are some papers studying the existence and the concentration behavior of normalized solutions to \eqref{1.4}, see e.g. \cite{c,j,m,s1,s2}. Similarly, a solution with $|u|_2=c$ can be obtained by looking for a minimizer of $e_{V}(c)=\inf\limits_{u\in S_c}E_V(u)$ (or $e_0(c)=\inf\limits_{u\in \widetilde{S}_c}E_0(u)$ for the case $V(x)\equiv0$), where
 $$
E_{V}(u)=\ds\frac{1}{2}\ds\int_{\R^N}(|\nabla u|^2+V(x)|u|^2)
-\frac{1}{p}\ds\int_{\R^N}|u|^{p},~~~~~~~u\in \mathcal{H}.
$$
It is showed that $p=\frac{2N+4}N$ is the $L^2$-critical exponent for $e_0(c)$, i.e. for all $c>0$, $e_0(c)>-\infty$ if $2<p<\frac{2N+4}{N}$ and $e_0(c)=-\infty$ if $\frac{2N+4}{N}<p<2^*$ (see e.g. \cite{c,j,s2}). Moreover, if $2<p<\frac{2N+4}{N}$, then $e_0(c)<0$ and $e_0(c)$ has a minimizer for all $c$; if $\frac{2N+4}{N}\leq p<2^*$, then $e_0(c)$ has no minimizers for all $c$. In \cite{m}, Maeda shows that up to translations, the minimizer of $e_0(c)$ is unique (see also \cite{gnn}\cite{k}) and gets a scaling property of the minimizer of $e_0(c)$ for $2<p<\frac{2N+4}{N}$, i.e. suppose that $u_c$ is a minimizer of $e_0(c),$ then up to translations,
\begin{equation}\label{1.12}
u_c(x)=c^{\frac{4}{2N+4-Np}}u_1(c^{\frac{2(p-2)}{2N+4-Np}}x)~~~~~\hbox{and}~~~~~e_0(c)=c^{\frac{4N-2p(N-2)}{2N+4-Np}}e_0(1).
\end{equation}
When $V(x)\not\equiv0$, Maeda in \cite{m} proves that $e_V(c)$ has a minimizer for all $c>0$ if $2<p<\frac{2N+4}{N}$ and $V(x)$ satisfies that $0=\inf\limits_{x\in\R^N}V(x)<\lim\limits_{|x|\rightarrow+\infty}V(x)=\sup\limits_{x\in\R^N}V(x)\leq \infty.$
In addition, if $V(x)\in C^1(\R^N)$ satisfies $|\nabla V(x)|\leq C(1+V(x))$ for some positive constant $C$, by using the essential scaling property \eqref{1.12} and a special $L^2$-preserving scaling, \cite{m} shows that there exists $y_c\in\R^N$ such that the minimizer of $e_V(c)$, denoted by $\tilde{u}_c$, satisfies $c^{-\frac{4}{2N+4-Np}}\tilde{u}_c(c^{-\frac{2(p-2)}{2N+4-Np}}x+y_c)\rightarrow u_1$ in $H^1(\R^N)$ as $c\rightarrow+\infty$, where $u_1$ is the minimizer of $e_0(1)$ given in \eqref{1.12}. For $p=\frac{2N+4}{N}$, when $V(x)$ is locally bounded and $V(x)\rightarrow+\infty$ as $|x|\rightarrow+\infty$, Guo et. al. in \cite{dgl}\cite{gs}\cite{gzz} proved that $e_{V}(c)$ has a minimizer if and only if $0<c<c_0$ for some $c_0>0$. Moreover, the minimizer of $e_{V}(c)$ concentrates at the flattest minimum of $V(x)$ as $c\nearrow c_0$.

 When $b>0$, due to the effect of the term $(\int_{\R^N}|\nabla u|^2)^2$ appeared in the energy functional, $p=\frac{2N+4}{N}$ is now $L^2$-subcritical for \eqref{1.3} and \eqref{1.8}. The existence of normalized solutions to equation \eqref{1.1} may be different from the Schr\"{o}dinger one. When $V(x)\equiv0$, by the $L^2$-preserving scaling and the well-known Gagliardo-Nirenberg inequality with the best constant \cite{we,kw}:
\begin{equation}\label{1.5}
\int_{\R^N}|u|^p\leq \frac{p}{2|Q_p|_2^{p-2}}|u|_2^{\frac{2N-p(N-2)}{2}}|\nabla u|_2^{\frac{N(p-2)}{2}},~~~~\forall~u\in H^1(\R^N),
\end{equation}
when either $p\in[2,\frac{2N}{N-2})$ with $N\geq3$ or $p\in[2,+\infty)$ with $N=1,2$, where the equality holds for $u=Q_p$ and $Q_p$ is, up to translations, the unique positive least energy solution of
\begin{equation}\label{1.6}
-\frac{N(p-2)}{4}\Delta Q_p+\frac{2N-p(N-2)}{4}Q_p=|Q_p|^{p-2}Q_p,~~~x\in\R^N,
\end{equation} it is showed in \cite{ye1,ye2} that $p=\frac{2N+8}{N}$ is the $L^2$-critical exponent for the minimization problem \eqref{1.8} in the sense that for each $c>0$, $i_0(c)>-\infty$ if $2<p<\frac{2N+8}{N}$ and $i_0(c)=-\infty$ if $\frac{2N+8}{N}<p<2^*$. Moreover, we collect some known results in the following proposition concerning the minimizers of $i_0(c)$.

\begin{proposition}\label{lem1.1}(\cite{ye1}, Theorem 1.1)~~Assume that $2<p<2^*$ and $N\leq3$.

 (1)~~If $2<p<\frac{2N+8}{N}$, then

~~~~~(i)~~there exists $$ \left\{
 \begin{array}{ll}
 c_*=0,\,\,\, &~2<p<\frac{2N+4}{N}, \vspace{0.2cm}\\
 c_*=a^{\frac{N}{4}}|Q_{\frac{2N+4}{N}}|_{2},\,\,\, &~p=\frac{2N+4}{N}, \vspace{0.2cm}\\
 c_*\in(0,+\infty),\,\,\, &~\frac{2N+4}{N}< p<\frac{2N+8}{N},
 \end{array}
 \right.$$
  such that
$i_0(c)=0~~\hbox{for~all}~0<c\leq c_*$ and $i_0(c)<0~~\hbox{for~all}~c>c_*.
$

~~~~(ii)~~$i_0(c)$ has a minimizer if and only if $c\in T$, where
\begin{equation}\label{1.13} T=\left\{
 \begin{array}{ll}
 (0,+\infty),\,\,\, &~2<p<\frac{2N+4}{N}, \vspace{0.2cm}\\
 (a^{\frac{N}{4}}|Q_{\frac{2N+4}{N}}|_{2},+\infty),\,\,\, &~p=\frac{2N+4}{N}, \vspace{0.2cm}\\

 [ c_*,+\infty),\,\,\, &~\frac{2N+4}{N}< p<\frac{2N+8}{N}.
 \end{array}
 \right.
 \end{equation}

(2)~~If $p=\frac{2N+8}{N},$ then $i_0(c)=0$ for all $ 0<c\leq(\frac{b}{2}|Q_{\frac{2N+8}{N}}|_2^{\frac{8}{N}})^{\frac{N}{8-2N}}$ and $i_0(c)=-\infty$ for all $ c>(\frac{b}{2}|Q_{\frac{2N+8}{N}}|_2^{\frac{8}{N}})^{\frac{N}{8-2N}}.$
Moreover, $i_0(c)$ has no minimizer for all $c>0$.

(3)~~If $\frac{2N+8}{N}<p<2^*$, then $i_0(c)=-\infty$ and $i_0(c)$ has no minimizer for all $c>0$.
\end{proposition}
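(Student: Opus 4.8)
The plan is to reduce the constrained minimization entirely to the elementary analysis of one scalar function. For $u\in\widetilde{S}_c$ put $t=|\nabla u|_2^2$ and apply the Gagliardo--Nirenberg inequality \eqref{1.5} to the last term of $I_0$; since $|u|_2=c$ is fixed this gives the pointwise lower bound
\[
I_0(u)\ge g_c(t):=\frac{a}{2}\,t+\frac{b}{4}\,t^2-\frac{c^{\frac{2N-p(N-2)}{2}}}{2|Q_p|_2^{p-2}}\,t^{\frac{N(p-2)}{4}},\qquad t\ge 0.
\]
The exponent $\alpha:=\frac{N(p-2)}{4}$ of the nonlinear term is the organizing quantity: $\alpha=1$ exactly when $p=\frac{2N+4}{N}$ and $\alpha=2$ exactly when $p=\frac{2N+8}{N}$, so the regimes $\alpha<1$, $1\le\alpha<2$ and $\alpha\ge 2$ match precisely the three cases of the statement.

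To upgrade the lower bound to an identity I would use that equality in \eqref{1.5} holds exactly for the rescaled extremals $u=AQ_p(\mu(\cdot-x_0))$, and that this family is scale invariant: fixing $|u|_2=c$ determines $A=A(\mu)$, and then $t=|\nabla u|_2^2=\frac{c^2|\nabla Q_p|_2^2}{|Q_p|_2^2}\mu^2$ sweeps out all of $(0,\infty)$ while $I_0(u)=g_c(t)$ holds with \emph{equality}. Using these as test functions gives $i_0(c)\le\inf_{t>0}g_c(t)$, which together with the GN lower bound yields $i_0(c)=\inf_{t\ge 0}g_c(t)$; since $g_c(0)=0$ this infimum is always $\le 0$, and it remains only to read off its sign and attainment.

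The scalar analysis then proceeds by regime. For $\alpha>2$ the last term dominates and $g_c(t)\to-\infty$, so $i_0(c)=-\infty$ (part (3)). For $\alpha=2$ one has $g_c(t)=\frac{a}{2}t+(\frac b4-\gamma)t^2$ with $\gamma=\frac{c^{(8-2N)/N}}{2|Q_p|_2^{8/N}}$, whence $i_0(c)=0$ if $\gamma\le\frac b4$ and $i_0(c)=-\infty$ if $\gamma>\frac b4$; solving $\gamma=\frac b4$ reproduces the threshold in part (2). For $\alpha<1$ one has $g_c'(0^+)=-\infty$, so $g_c<0$ near $0$ and $i_0(c)<0$ for every $c>0$ ($c_*=0$). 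For $\alpha=1$, $g_c(t)=(\frac a2-\gamma)t+\frac b4 t^2$ with $\gamma=\frac{c^{4/N}}{2|Q_p|_2^{4/N}}$ is nonnegative iff $\gamma\le\frac a2$, i.e. iff $c\le a^{N/4}|Q_{\frac{2N+4}{N}}|_2=:c_*$. The only genuinely quantitative point is the range $\frac{2N+4}{N}<p<\frac{2N+8}{N}$ ($1<\alpha<2$): here $g_c(0)=0$, $g_c'(0^+)=\frac a2>0$ and $g_c(t)\to+\infty$, and since the coefficient $\gamma$ is strictly increasing in $c$ (because $\frac{2N-p(N-2)}{2}>0$ for $p<2^*$), $g_c$ is $\ge 0$ for small $c$ and dips below $0$ for large $c$. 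Establishing that the transition occurs at a unique $c_*\in(0,\infty)$, at which $g_{c_*}$ is tangent to $0$ at some $t_*>0$, is the main obstacle; it is handled through the simultaneous tangency equations $g_{c_*}(t_*)=g_{c_*}'(t_*)=0$ together with the monotonicity in $c$.

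Attainment follows from the same reduction. Whenever $\min_{t\ge 0}g_c$ is realized at some $t_0>0$ --- which happens exactly when $i_0(c)<0$, and also at $c=c_*$ in the intermediate regime via the tangency point $t_*$ --- the rescaled extremal $AQ_p(\mu\cdot)$ tuned so that $|u|_2=c$ and $|\nabla u|_2^2=t_0$ gives $I_0(u)=g_c(t_0)=i_0(c)$ and is therefore a minimizer, producing the sets $T$ in \eqref{1.13}. Conversely, in every case where $i_0(c)=0$ is attained only at $t=0$ --- namely $0<c\le c_*$ when $p=\frac{2N+4}{N}$ and $0<c<c_*$ when $\frac{2N+4}{N}<p<\frac{2N+8}{N}$ --- one has $g_c(t)>0$ for all $t>0$, so $I_0(u)\ge g_c(|\nabla u|_2^2)>0=i_0(c)$ for every $u\in\widetilde{S}_c$ (no function in $\widetilde{S}_c$ can have $|\nabla u|_2=0$), ruling out minimizers. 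This dichotomy transparently explains why $c_*$ is excluded from $T$ when $p=\frac{2N+4}{N}$ (there $g_{c_*}(t)=\frac b4 t^2>0$ on $(0,\infty)$) but included once $p>\frac{2N+4}{N}$ (there the tangency sits at $t_*>0$).
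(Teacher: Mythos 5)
Your proposal is correct, and it is worth noting at the outset that the paper itself contains no proof of Proposition \ref{lem1.1}: the statement is quoted from Theorem 1.1 of \cite{ye1}, so there is no in-paper argument to match against. Your reduction is sound: the sharp inequality \eqref{1.5} gives $I_0(u)\ge g_c(|\nabla u|_2^2)$ on $\widetilde{S}_c$, and since both sides of \eqref{1.5} scale with the same powers of $A$ and $\mu$ along the family $u=AQ_p(\mu\,\cdot)$, equality persists on that family; fixing $|u|_2=c$ forces $A^2=c^2\mu^N|Q_p|_2^{-2}$, so $t=|\nabla u|_2^2=c^2|\nabla Q_p|_2^2|Q_p|_2^{-2}\mu^2$ sweeps all of $(0,\infty)$ with $I_0(u)=g_c(t)$, whence indeed $i_0(c)=\inf_{t\ge0}g_c(t)$. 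The case analysis in $\alpha=\frac{N(p-2)}{4}$ is then elementary and the thresholds you compute ($c\le a^{N/4}|Q_{\frac{2N+4}{N}}|_2$ from $\gamma\le\frac a2$ when $\alpha=1$; $c\le(\frac b2|Q_{\frac{2N+8}{N}}|_2^{8/N})^{N/(8-2N)}$ from $\gamma\le\frac b4$ when $\alpha=2$) match the statement. The genuine difference from \cite{ye1} --- and from the machinery this paper uses nearby (Lagrange multipliers, the Pohozaev identity \eqref{4.3}, uniqueness of positive solutions of \eqref{4.10}) --- is that your route gets attainment \emph{for free}: whenever $\inf g_c$ is achieved at some $t_0>0$ you exhibit an explicit minimizer among the Gagliardo--Nirenberg extremals, with no minimizing-sequence or compactness argument at all, and nonattainment when $g_c>0$ on $(0,\infty)$ is immediate. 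As a bonus your scheme is fully consistent with the paper's later results: $g_c$ evaluated at its interior critical point reproduces $i_0(c)=aD_1m_c^2-bD_2m_c^4$ of Corollary \ref{cor4.3} via \eqref{4.6}, and the tangency system $g_{c_*}(t_*)=g_{c_*}'(t_*)=0$ yields exactly the explicit $c_*$ of Lemma \ref{lem4.7}. What the reduction does not give (and does not need for this Proposition) is uniqueness of minimizers, which would further require the equality \emph{characterization} in \eqref{1.5}; the paper proves that separately in Lemma \ref{lem4.2}.

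Two loose ends you should tighten, neither of which threatens the argument. First, finish the intermediate regime $1<\alpha<2$ explicitly: writing $g_c(t)=t^{\alpha}\bigl(h(t)-\gamma(c)\bigr)$ with $h(t)=\frac a2 t^{1-\alpha}+\frac b4 t^{2-\alpha}$, one checks $h\to+\infty$ at both ends of $(0,\infty)$ with the unique critical point $t_*=\frac{2a(\alpha-1)}{b(2-\alpha)}$; since $\gamma(c)$ is continuous, strictly increasing, and runs from $0$ to $+\infty$ (here $\frac{2N-p(N-2)}{2}>0$ uses $p<2^*$), the unique $c_*$ solves $\gamma(c_*)=h(t_*)$, and the sign, tangency, and attainment claims all follow. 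Second, your ``namely'' list of nonattainment cases omits $p=\frac{2N+8}{N}$ with $0<c\le(\frac b2|Q_{\frac{2N+8}{N}}|_2^{8/N})^{N/(8-2N)}$, which is needed for the clause ``no minimizer for all $c>0$'' in part (2); the same mechanism applies, since there $g_c(t)\ge\frac a2 t>0$ for all $t>0$.
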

If $V(x)\not\equiv0$, then the case becomes totally different from the autonomous one since the $L^2$-preserving scaling argument fails here. As far as we know, there is no paper on this respect. In this paper, we study the existence and the concentration behavior of minimizers for $i_V(c)$ when $V(x)$ satisfies the following condition:
$$~~~~~~~~~~~~~V(x)\in L_{loc}^{\infty}(\R^N),~~~\inf\limits_{x\in\R^N}V(x)=0~~~
\hbox{and}~~~\lim_{|x|\rightarrow+\infty}V(x)=+\infty.~~~~~~~~~~~~~~(V)$$

We call $(u_c,\rho_c)\in S_c\times\R$ a couple of solution to the equation \eqref{1.1} if $(u_c,\rho_c)$ satisfies \eqref{1.1} and $|u_c|_2=c$.

Our first result is as follows.

\begin{theorem}\label{th1.1}~~Suppose that $V(x)$ satisfies $(V)$, $2<p<2^*$ and $N\leq3$.

(1)~~For each
$$\left\{
 \begin{array}{ll}
 c>0,\,\,\, &~2<p<\frac{2N+8}{N}, \vspace{0.2cm}\\
 0<c\leq (\frac{b}{2}|Q_{\frac{2N+8}{N}}|_2^{\frac{8}{N}})^{\frac{N}{8-2N}},\,\,\, &~p=\frac{2N+8}{N},
 \end{array}
 \right.$$
  $i_{V}(c)$ has at least one minimizer. Moreover, there exists a couple of solution $(u_c,\rho_c)\in S_c\times\R$  to \eqref{1.1} with $I_{V}(u_c)=i_{V}(c).$

(2)~~For each $c>(\frac{b}2|Q_{\frac{2N+8}{N}}|_2^{\frac{8}{N}})^{\frac{N}{8-2N}}$ if $p=\frac{2N+8}{N}$ or $c>0$ if $\frac{2N+8}{N}<p<2^*$,
 $i_V(c)=-\infty$ and $i_V(c)$ has no minimizer.
\end{theorem}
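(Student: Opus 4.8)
The plan is to prove the two parts by complementary methods: Part~(1) by the direct method of the calculus of variations, using the coercivity of $V$ to restore compactness, and Part~(2) by exhibiting an explicit $L^2$-preserving family along which $I_V\to-\infty$.

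For Part~(1) I would first check $i_V(c)>-\infty$. Writing $I_V(u)=I_0(u)+\tfrac12\int_{\R^N}V|u|^2$ with $V\geq0$ and estimating $\tfrac1p\int_{\R^N}|u|^p$ via the sharp inequality \eqref{1.5} by a constant times $c^{\frac{2N-p(N-2)}2}|\nabla u|_2^{\frac{N(p-2)}2}$, the gradient exponent $\frac{N(p-2)}2$ is $<4$ for $2<p<\frac{2N+8}N$, while for $p=\frac{2N+8}N$ it equals $4$ with remaining coefficient $\frac b4-\frac{c^{(8-2N)/N}}{2|Q_p|_2^{8/N}}\geq0$ exactly when $c$ is at most the stated threshold; in either case the quartic term $\frac b4|\nabla u|_2^4$ dominates and Young's inequality yields a lower bound, so $i_V(c)$ is finite. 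A minimizing sequence $\{u_n\}$ then has $|\nabla u_n|_2$ and $\int_{\R^N}V|u_n|^2$ bounded, hence $\{u_n\}$ is bounded in $\mathcal H$ and $u_n\rightharpoonup u_c$ along a subsequence. The crucial step is compactness: since $V(x)\to+\infty$, on $\{|x|>R\}$ one has $\int|u_n|^2\leq(\inf_{|x|>R}V)^{-1}\int_{\R^N}V|u_n|^2$, so the tails are uniformly small; combined with the local Rellich--Kondrachov theorem this yields the compact embedding $\mathcal H\hookrightarrow L^q(\R^N)$ for $2\leq q<2^*$. Therefore $u_n\to u_c$ in $L^2$ and $L^p$, whence $|u_c|_2=c$ and $\int|u_n|^p\to\int|u_c|^p$; weak lower semicontinuity of $u\mapsto|\nabla u|_2^2$, of $u\mapsto|\nabla u|_2^4$ and of $u\mapsto\int_{\R^N}V|u|^2$ then gives $I_V(u_c)\leq i_V(c)$, so $u_c$ is a minimizer. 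As $S_c$ is a smooth constraint, the Lagrange multiplier rule furnishes $\rho_c\in\R$ with $-(a+b\int_{\R^N}|\nabla u_c|^2)\Delta u_c+(V-\rho_c)u_c=|u_c|^{p-2}u_c$, i.e.\ the couple $(u_c,\rho_c)$ solves \eqref{1.1}.

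For Part~(2) I would concentrate mass at the origin. Put $w=\frac{c}{|Q_p|_2}Q_p$, so $|w|_2=c$ and $w$ attains equality in \eqref{1.5}; cut $w$ off to a compactly supported $w_R$ and renormalize so that $|w_R|_2=c$, which for large $R$ is $H^1$-close to $w$. The dilation $w_{R,t}(x)=t^{N/2}w_R(tx)$ keeps $|w_{R,t}|_2=c$, gives $|\nabla w_{R,t}|_2^2=t^2|\nabla w_R|_2^2$ and $\int|w_{R,t}|^p=t^{\frac{N(p-2)}2}\int|w_R|^p$, while $\int_{\R^N}V|w_{R,t}|^2=\int_{B_R}V(y/t)|w_R(y)|^2\,dy\leq c^2\sup_{B_R}V$ stays bounded for $t\geq1$ (using $V\in L^\infty_{loc}$). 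For $\frac{2N+8}N<p<2^*$ the exponent $\frac{N(p-2)}2>4$, so the nonlinearity outweighs the quartic gradient term and $I_V(w_{R,t})\to-\infty$ as $t\to+\infty$. For $p=\frac{2N+8}N$ both scale like $t^4$ and the sign is governed by $\frac b4|\nabla w_R|_2^4-\frac1p\int|w_R|^p$, which for large $R$ approaches $\big(\frac b4-\frac{c^{(8-2N)/N}}{2|Q_p|_2^{8/N}}\big)|\nabla w|_2^4<0$ precisely when $c>(\frac b2|Q_p|_2^{8/N})^{\frac N{8-2N}}$; again $I_V(w_{R,t})\to-\infty$. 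Thus $i_V(c)=-\infty$, and a fortiori no minimizer exists.

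The main obstacle is the compactness in Part~(1): one must exclude loss of mass at infinity along the minimizing sequence, and this is exactly where hypothesis $(V)$ is indispensable, the autonomous $L^2$-preserving scaling behind \eqref{1.8} being unavailable once $V\not\equiv0$. A secondary point, in Part~(2), is keeping the potential term bounded along the concentrating family; the cutoff-and-dilation device handles this by confining the support to a shrinking ball about the origin, on which $V$ is bounded, so that the limit is dictated solely by the competition between the gradient and nonlinear terms.
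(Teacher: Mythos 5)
Your proposal is correct and takes essentially the same route as the paper: for (1), Gagliardo--Nirenberg coercivity (with the quartic gradient term absorbing $\frac1p\int_{\R^N}|u|^p$ exactly up to the threshold value of $c$ when $p=\frac{2N+8}{N}$), the compact embedding $\mathcal{H}\hookrightarrow L^q(\R^N)$ for $2\leq q<2^*$, weak lower semicontinuity, and the Lagrange multiplier rule; for (2), a concentrating cutoff-and-dilation family built from $Q_p$, whose sharpness in \eqref{1.5} pins down the critical threshold. The only cosmetic difference is in (2): your cutoff scales with the dilation, so the potential term is merely bounded by $c^2\sup_{B_{2R}}V$ (which under $V\in L^\infty_{loc}$ is arguably cleaner), whereas the paper fixes the cutoff $\varphi$ and lets $\int_{\R^N}V|u_t|^2\rightarrow V(0)c^2$ --- same estimate, same conclusion.
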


Note that $i_{V}(c)\geq i_{0}(c)$ for all $c>0$. Under condition $(V)$, the embedding $\mathcal{H}\hookrightarrow L^q(\R^N),$ $q\in[2,2^*)$ is compact (see e.g. \cite{bw}). To prove Theorem \ref{th1.1} (1), it is enough to show the boundedness of each minimizing sequence for $i_{V}(c)$, which can be obtained by using the inequality \eqref{1.5}. By using the least energy solution $Q_p$ as a test function and a suitable $L^2$-preserving scaling, we succeeded in obtaining Theorem \ref{th1.1} (2).

Our main result concerns the concentration phenomena of minimizers $u_c$ of $i_{V}(c)$ as $c\rightarrow+\infty$ for $2<p<\frac{2N+8}{N}$. Since $\lim\limits_{c\rightarrow+\infty}i_0(c)=-\infty$ (see Lemma \ref{lem4.4} below), it is reasonable to conjecture that $i_V(c)\rightarrow-\infty$ as $c\rightarrow+\infty$. To do so, we need to study the property of minimizers for $i_0(c)$. The theorem below summarizes some properties of $i_0(c)$.

\begin{theorem}\label{th1.2}~~Let $2<p<\frac{2N+8}{N}$ and $c\in T$. Suppose that $v_c\in \widetilde{S}_c$ is a minimizer of $i_0(c)$.

(1)~~Up to translations,
$$v_c(x)=\frac{c}{|Q_p|_2}(\frac{m_c}{c})^{\frac N2}Q_p(\frac{m_c}{c}x),$$
where $m_c=(\frac{\sqrt{a^2D_1^2-4bD_2i_0(c)}+aD_1}{2bD_2})^{\frac12}$,
$D_1=\frac{N(p-2)-4}{2N(p-2)}$ and $D_2=\frac{2N+8-Np}{4N(p-2)}.$ That is to say, $i_0(c)$ has a unique minimizer for each $c\in T$. Moreover,
$$i_0(c)=aD_1m_c^2-bD_2m_c^4.$$

(2)~~ $\lim\limits_{c\rightarrow+\infty}\frac{i_0(c)}{c^2}=-\infty$.

(3)~~ If $p\neq4$, then either $\frac{m_c}{c}\rightarrow+\infty$ or $\frac{m_c}{c}\rightarrow 0$ as $c\rightarrow+\infty$.

~~~~~~If $p=4$, then $\frac{m_c}{c}\rightarrow(\frac{N}{2b|Q_4|_2^2})^{\frac{1}{4-N}}$ as $c\rightarrow+\infty$.

$(4)$~~There exists $\mu_c=-\frac{2N-p(N-2)}{4}\frac{c^{p-2-\frac{N(p-2)}{2}}}{|Q_p|_2^{p-2}}m_c^{\frac{N(p-2)}{2}}<0$  such that $(v_c,\mu_c)$ satisfies the following equation
$$-\left(a+b\ds\int_{\R^N}|\nabla v_c|^2\right)\Delta v_c-|v_c|^{p-2}v_c=\mu_c v_c,~~~x\in \R^N.$$
\end{theorem}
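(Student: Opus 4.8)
The plan is to establish Theorem~\ref{th1.2} by exploiting the scaling structure of the autonomous problem $i_0(c)$ and the known uniqueness of the Gagliardo--Nirenberg optimizer $Q_p$. The starting point is that any minimizer $v_c$ of $i_0(c)$ satisfies the Euler--Lagrange equation with some Lagrange multiplier, and by standard rearrangement/regularity arguments one may assume $v_c$ is positive. The key idea is to \emph{absorb} the two free parameters of the problem---the $L^2$-mass $c$ and the kinetic energy $|\nabla v_c|_2$---into a rescaling of $Q_p$.

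For part (1), I would first set $t_c:=|\nabla v_c|_2^2$ and rewrite the Euler--Lagrange equation
$$-\left(a+b\,t_c\right)\Delta v_c+|\mu_c|\,v_c=|v_c|^{p-2}v_c$$
(the multiplier is negative, as will be confirmed). The plan is to show that the ansatz $v_c(x)=\beta\,Q_p(\gamma x)$ solves this equation for suitable $\beta,\gamma>0$ by matching it to \eqref{1.6}; comparing coefficients gives two algebraic relations that, together with the mass constraint $|v_c|_2=c$, pin down $\beta,\gamma$ in terms of $c$ and $t_c$. Scaling $|\nabla v_c|_2^2=t_c$ then yields a self-consistent equation for $t_c$, and introducing $m_c$ via $\frac{m_c}{c}=\gamma$ (equivalently $t_c=\frac{N(p-2)}{2}\,\frac{c^2}{|Q_p|_2^2}\cdot(\text{power of }\gamma)$) recasts this as a quadratic in $m_c^2$. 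The identities $I_0(v_c)=aD_1m_c^2-bD_2m_c^4$ and the closed form for $m_c$ should then fall out after substituting the scaling relations into the energy; the specific constants $D_1,D_2$ are precisely what organizes $\frac12,\frac{b}{4},-\frac1p$ and the exponents $\frac{N(p-2)}{2}$, $\frac{2N-p(N-2)}{2}$ from \eqref{1.5} into a clean quadratic. Uniqueness of $v_c$ up to translation follows from uniqueness of $Q_p$ once the parameters are shown to be uniquely determined.

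For parts (2)--(4): part (4) is essentially the bookkeeping from the scaling, reading off $\mu_c$ from the matched coefficients, so I would prove it alongside (1). For part (2), I would use Proposition~\ref{lem1.1} and the test-function bound: plugging a fixed profile rescaled to mass $c$ into $I_0$ and optimizing the dilation shows $i_0(c)\le -C c^{\gamma}$ with $\gamma>2$ in the regime $2<p<\frac{2N+8}{N}$, forcing $i_0(c)/c^2\to-\infty$; alternatively this follows directly from the formula in (1) by analyzing the growth of $m_c$. Part (3) is the dichotomy analysis of $\frac{m_c}{c}$, obtained by dividing the quadratic relation for $m_c$ by $c^2$ and examining the asymptotics of the coefficients as $c\to\infty$; the exponent $4-N$ appearing in the $p=4$ limit signals that $p=4$ is the borderline case where the two balancing terms scale identically, giving a finite nonzero limit, whereas for $p\neq4$ one term dominates and drives $\frac{m_c}{c}$ to $0$ or $\infty$.

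I expect the main obstacle to be part~(1): correctly carrying out the two-parameter scaling so that all exponents and the best constant $|Q_p|_2$ line up, and in particular verifying that the resulting equation for $m_c^2$ is genuinely the stated quadratic with a \emph{unique} admissible positive root (the sign choice in $m_c=\big(\frac{\sqrt{a^2D_1^2-4bD_2i_0(c)}+aD_1}{2bD_2}\big)^{1/2}$ must be justified, using $i_0(c)<0$ for $c\in T$ to guarantee the discriminant is positive and the root real). Once the algebra of (1) is pinned down, parts (2)--(4) are comparatively direct consequences of the explicit formula, and the uniqueness claim rests on the already-cited uniqueness of $Q_p$.
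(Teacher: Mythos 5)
Your overall route for part (1) is essentially the paper's: identify the minimizer with a rescaled $Q_p$ through the Euler--Lagrange equation and the uniqueness, up to translation, of positive solutions of \eqref{1.6}; use the mass constraint together with $|\nabla Q_p|_2=|Q_p|_2$ (from \eqref{2.3}) to pin down the two scaling parameters; and obtain the quadratic $i_0(c)=aD_1m_c^2-bD_2m_c^4$, whose unique admissible root gives $m_c^2$. Your root-selection remark is correct in spirit, though note $i_0(c_*)=0$ is allowed at the endpoint of $T$; what one actually uses is $i_0(c)\le 0$ together with $\int_{\R^N}|\nabla v_c|^2>0$, which already forces the plus sign. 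Parts (3) and (4) as you sketch them also track the paper: dividing the consistency relation \eqref{4.6} by $c^2$ is exactly the paper's \eqref{4.9}, and $\mu_c$ is the matched zeroth-order coefficient. For part (2) you genuinely deviate: the paper proves that $c\mapsto i_0(c)/c^2$ is strictly decreasing and excludes a finite limit by a contradiction with \eqref{4.6}, whereas your test-function route (rescale a fixed profile to mass $c$ and optimize the dilation $t=c^{-\alpha}$) also works for $N\le3$, since there is a feasible window $\frac{2(4-p)}{2N+8-Np}<\alpha<\frac{2}{N}$ in which the negative term dominates with exponent strictly larger than $2$; this is arguably more elementary than the paper's monotonicity argument.

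The one genuine gap is the sign of the Lagrange multiplier. Your plan to ``confirm'' $\mu_c<0$ afterwards, by reading it off from the matched coefficients in part (4), is circular: matching $v_c$ to a rescaled $Q_p$ is only possible when $-\mu_c>0$, because \eqref{1.6} carries a positive mass term; if $\mu_c\ge0$, no admissible $\beta,\gamma>0$ exist, and the identification of $v_c$ with a rescaled $Q_p$ --- hence the entire uniqueness argument --- never gets off the ground. You need an independent proof that $\mu_c<0$ \emph{before} the matching step. The paper supplies it via the Pohozaev identity for the Kirchhoff equation (Lemma 2.1 of \cite{ly1}), which yields \eqref{4.3} and then the explicitly negative formula \eqref{4.4}; equivalently, since $v_c$ is a minimizer and the dilation $u^t(x)=t^{N/2}u(tx)$ preserves $\widetilde{S}_c$, you could differentiate $t\mapsto I_0(v_c^t)$ at $t=1$ to obtain \eqref{4.3} without invoking elliptic regularity. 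Note also that the paper extracts $\int_{\R^N}|\nabla v_c|^2=m_c^2$ from \eqref{4.3} and \eqref{4.11} \emph{before} knowing the profile, while you derive the quadratic by substituting the identified profile into the energy; your ordering is algebraically sound (the exponents do organize into $D_1$, $D_2$ as you predict), but only once the missing $\mu_c<0$ step is supplied. With that repaired, your proposal coincides with the paper's proof.
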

Then our main result is as follows.

\begin{theorem}\label{th1.3}~~Suppose that $V(x)$ satisfies $(V)$, $N\leq3$ and $p\in(2,\frac{2N+8}{N})\backslash\{4\}$. For any sequence $\{c_n\}\subset(0,+\infty)$ with $c_n\rightarrow+\infty$ as $n\rightarrow+\infty$, let $(u_{c_n},\rho_{c_n})\in S_{c_n}\times\R$ be the couple of solution to \eqref{1.1} obtained in Theorem \ref{th1.1}. Then there exists a subsequence of $\{c_n\}$ (still denoted by $\{c_n\}$) and a sequence $\{z_n\}\subset\R^N$ such that
$$\frac{|Q_{p}|_2}{c_n}\left(\frac{c_n}{m_{c_n}}\right)^{\frac{N}{2}}
u_{c_n}\left(\frac{c_n}{m_{c_n}}\left(x+z_n\right)\right)\longrightarrow Q_p(x)$$
 in $L^q(\R^N)$ for all $2\leq q<2^*$ and
 $$\left[\frac{1}{c_n}(\frac{c_n}{m_{c_n}})^{\frac{N}{2}}\right]^{p-2}\rho_{c_n}\rightarrow-\frac{4|Q_p|_2^{p-2}}{2N-p(N-2)}$$ as $n\rightarrow+\infty.$
\end{theorem}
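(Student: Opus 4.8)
The plan is to read Theorem~\ref{th1.3} as the assertion that the minimizer $u_{c_n}$ of $i_V(c_n)$ is, to leading order, the explicit minimizer $v_{c_n}$ of $i_0(c_n)$ produced in Theorem~\ref{th1.2}. The rescaling $u\mapsto\frac{|Q_p|_2}{c}(\frac{c}{m_c})^{N/2}u(\frac{c}{m_c}\,\cdot\,)$ carries $v_c$ exactly onto $Q_p$ and turns the constraint $|u|_2=c$ into $|u|_2=|Q_p|_2$; hence, setting $w_n(x)=\frac{|Q_p|_2}{c_n}(\frac{c_n}{m_{c_n}})^{N/2}u_{c_n}(\frac{c_n}{m_{c_n}}(x+z_n))$, one automatically has $|w_n|_2=|Q_p|_2$, and the entire problem reduces to producing translations $z_n$ along which $w_n\to Q_p$ and then to reading off $\rho_{c_n}$. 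The driving tool throughout is the sharp Gagliardo--Nirenberg inequality \eqref{1.5}, whose extremals are precisely the rescaled translates of $Q_p$.

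I would first establish the energy comparison $i_V(c_n)=i_0(c_n)(1+o(1))$. The inequality $i_V(c)\ge i_0(c)$ is immediate from $V\ge0$. For the reverse inequality I would use as competitor a scaled bump $\phi_\lambda(x)=\frac{c_n}{|Q_p|_2}\lambda^{N/2}Q_p(\lambda(x-x_0))\in S_{c_n}$ centred where $V$ is close to its infimum $0$, and optimise in $\lambda$. By \eqref{1.5}, $I_0(\phi_\lambda)=f_{c_n}(\lambda^2c_n^2|\nabla Q_p|_2^2/|Q_p|_2^2)$ where $f_c(s)=\frac a2 s+\frac b4 s^2-\frac{1}{2|Q_p|_2^{p-2}}c^{(2N-p(N-2))/2}s^{N(p-2)/4}$ is the reduced one-variable energy whose minimum over $s$ is $i_0(c)$, attained at $s=|\nabla v_c|_2^2$. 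The potential cost $\frac12\int_{\R^N}V|\phi_\lambda|^2=\frac{c_n^2}{2|Q_p|_2^2}\int_{\R^N}V(\lambda^{-1}z+x_0)|Q_p|^2\,dz$ is of lower order than $|i_0(c_n)|$ once $\lambda$ is taken of order $m_{c_n}/c_n$, because Theorem~\ref{th1.2}(2) gives $|i_0(c_n)|\gg c_n^2$; this yields $i_V(c_n)\le i_0(c_n)+o(|i_0(c_n)|)$. Combining the bounds and using $I_0(u_{c_n})\ge i_0(c_n)$, one gets $0\le\frac12\int_{\R^N}V|u_{c_n}|^2=i_V(c_n)-I_0(u_{c_n})\le i_V(c_n)-i_0(c_n)=o(|i_0(c_n)|)$, so that $u_{c_n}$ is asymptotically minimising for $i_0(c_n)$ and its potential energy is negligible at the scale $|i_0(c_n)|$.

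Next I would run the blow-up analysis on $w_n$. Since $I_0(u_{c_n})=i_0(c_n)(1+o(1))$ and the minimum of $f_{c_n}$ is nondegenerate, the gradient level is pinned down, $|\nabla u_{c_n}|_2^2/|\nabla v_{c_n}|_2^2\to1$; combined with $|\nabla v_{c_n}|_2^2=m_{c_n}^2|\nabla Q_p|_2^2/|Q_p|_2^2$, obtained by a direct computation from the explicit form in Theorem~\ref{th1.2}(1), this makes $w_n$ bounded in $H^1(\R^N)$ with $|w_n|_2=|Q_p|_2$ and $|\nabla w_n|_2\to|\nabla Q_p|_2$. As the quotient in \eqref{1.5} is invariant under scaling and translation, the near-equality in \eqref{1.5} for $u_{c_n}$ transfers to $w_n$, so $w_n$ maximises the quotient in \eqref{1.5} asymptotically. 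Applying the concentration-compactness principle, vanishing is excluded because the $L^p$-norm of $w_n$ stays bounded below, and dichotomy is excluded because it would strictly lower the limiting quotient; hence, up to a subsequence and suitable translations $z_n$, $w_n\to w_0$ strongly in $L^q(\R^N)$ for $2\le q<2^*$, with $w_0$ an extremal of \eqref{1.5} and $|w_0|_2=|Q_p|_2$, forcing $w_0=Q_p$. I expect this step, together with making the energy comparison uniform across the two regimes of Theorem~\ref{th1.2}(3), to be the main obstacle: when $m_{c_n}/c_n\to0$ the competitor $\phi_\lambda$ spreads out, so both the admissible choice of $\lambda$ and the exclusion of dichotomy must rely on the sharp growth of $m_{c_n}$ from Theorem~\ref{th1.2}(1), and one must confirm that the genuine scale of $u_{c_n}$ still matches $m_{c_n}/c_n$ rather than being driven by $V$.

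Finally, to capture $\rho_{c_n}$ I would rescale the equation \eqref{1.1}. Putting $\beta_n=\frac{c_n}{|Q_p|_2}(\frac{m_{c_n}}{c_n})^{N/2}$ and dividing the equation satisfied by $w_n$ by $\beta_n^{\,p-2}$, three ingredients combine in the limit: the rescaled Kirchhoff coefficient $(a+b\int_{\R^N}|\nabla u_{c_n}|^2)(\frac{m_{c_n}}{c_n})^2\beta_n^{-(p-2)}$ tends to $\frac{N(p-2)}{4}$ by the gradient asymptotics above; the rescaled potential term tends to $0$, since its integral against $|w_n|^2$ equals $\frac{|Q_p|_2^2}{\beta_n^{\,p-2}c_n^2}\int_{\R^N}V|u_{c_n}|^2$ and $\int_{\R^N}V|u_{c_n}|^2=o(|i_0(c_n)|)$ while $|i_0(c_n)|$ is comparable to $\beta_n^{\,p-2}c_n^2$; and $w_n\to Q_p$. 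Passing to the limit, the rescaled equation reduces to \eqref{1.6}, and matching the coefficient of the linear term forces $-\rho_{c_n}\beta_n^{-(p-2)}\to\frac{2N-p(N-2)}{4}$, which upon substituting $\beta_n=\frac{c_n}{|Q_p|_2}(\frac{m_{c_n}}{c_n})^{N/2}$ yields the claimed asymptotics for $[\frac{1}{c_n}(\frac{c_n}{m_{c_n}})^{N/2}]^{p-2}\rho_{c_n}$.
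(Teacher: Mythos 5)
Your overall skeleton is the same as the paper's: establish $i_V(c_n)/i_0(c_n)\to1$ (the paper's Lemma \ref{lem3.3}(3)), use the quantitative nondegeneracy of the reduced one-variable energy to pin $\int_{\R^N}|\nabla u_{c_n}|^2/m_{c_n}^2\to1$ together with the $L^p$ and potential asymptotics (this is exactly the mechanism of Lemma \ref{lem3.5}, where the paper's function $f(t)=\frac14t^2-\frac{2}{N(p-2)}t^{\frac{N(p-2)}{4}}+D_2$ with unique zero at $t=1$ plays the role of your ``nondegenerate minimum of $f_{c_n}$''), then rescale and extract compactness, then read off $\rho_{c_n}$. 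Your two local deviations are workable: for compactness the paper avoids re-running concentration-compactness by rescaling to a \emph{fixed} mass $c\in T$, showing $v_n:=\frac{c}{c_n}u_{c_n}^{\frac{c_nm_c}{m_{c_n}c}}$ is a minimizing sequence for $i_0(c)$ and invoking the known compactness plus uniqueness (Proposition \ref{pro4.6}), which is cleaner than your GN-near-extremal argument though yours, with the limit pinned by $|w_0|_2=|Q_p|_2$ and $|\nabla w_0|_2=|\nabla Q_p|_2$, would also identify $Q_p$; for the multiplier the paper uses the identity $\rho_{c_n}c_n^2=\langle I_V'(u_{c_n}),u_{c_n}\rangle$ and Lemma \ref{lem3.5} rather than passing to the limit in the rescaled PDE, which spares you justifying convergence of the rescaled potential term in the distributional sense.

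The genuine gap is in your upper bound for $i_V(c_n)$: the competitor $\phi_\lambda(x)=\frac{c_n}{|Q_p|_2}\lambda^{N/2}Q_p(\lambda(x-x_0))$ carries \emph{no cutoff}, and under hypothesis $(V)$ the potential is only $L^\infty_{loc}$ with $V(x)\to+\infty$ at an arbitrary rate. Since $Q_p(x)=O(|x|^{-\frac12}e^{-|x|})$, a potential such as $V(x)=e^{|x|^2}$ makes $\int_{\R^N}V|\phi_\lambda|^2=+\infty$ for every $\lambda>0$, so $\phi_\lambda\notin\mathcal{H}$ and is not even admissible; and even when the integral is finite, in the spreading regime $\lambda\sim m_{c_n}/c_n\to0$ Fatou's lemma gives $\int_{\R^N}V(\lambda^{-1}z+x_0)Q_p^2(z)\,dz\to+\infty$ at an uncontrolled rate, so your claim that the potential cost is of lower order than $|i_0(c_n)|$ ``because $|i_0(c_n)|\gg c_n^2$'' presumes an $O(1)$ bound on this factor that nothing in $(V)$ supplies. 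The paper's fix is precisely the compactly supported truncation $u_{R,c}=A_{R,c}\varphi(\frac{x-x_0}{R})Q_c(x-x_0)$ with $I_0(u_{R,c})\to i_0(c)$ as $R\to+\infty$ (see \eqref{3.17}), after which the potential term is killed by two different arguments in the two regimes: when $m_c/c\to+\infty$ by taking the infimum over points $x_0$ (see \eqref{3.18}), and when $m_c/c\to0$ by the bound $|V|_{L^{\infty}(B_{2R}(x_0))}(\frac{m_c}{c})^N\int_{B_{2R}(0)}Q_p^2(\frac{m_c}{c}x)\,dx\to0$ (see \eqref{3.19}), which uses only local boundedness of $V$. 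This cutoff-plus-dichotomy step is where $p\neq4$ (Theorem \ref{th1.2}(3)) actually enters; you correctly flagged the spreading regime as ``the main obstacle'' but left exactly this point unresolved, and without the truncation the step fails as stated.
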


\begin{remark}Although the equation \eqref{1.1} is ``nonlocal", the concentration behavior of the solution to \eqref{1.1} given in Theorem \ref{th1.3} when the $L^2$-norm of the solution is ``large", is similar to that of the solution to \eqref{1.4}, which is ``local" (see Lemma 4.6 in \cite{m}). However, our argument is different.
\end{remark}

We give the main idea of the proof of Theorem \ref{th1.3}. As the problem we deal with is nonlocal, the approach of \cite{m} considering mass concentration of Schr\"{o}dinger equation with the $L^2$-subcritical exponent cannot be applied directly for two reasons. First, since the terms in $I_0(u)$ or in $I_V(u)$ scale differently in space, one cannot hope to get rid of the nonlocal term by scaling in space. Then we cannot follow the argument in \cite{m} to show that the minimizer of $i_0(c)$ possesses a property like \eqref{1.12}, which makes it difficult to study the convergence of $i_V(c)$ as $c\rightarrow+\infty$. To overcome this difficulty, \textbf{the key point is to prove the uniqueness of the minimizer $v_c$ of $i_0(c)$ and to give the accurate expression of $v_c$ (see Theorem \ref{th1.2}).} This is not easy and it needs much more analysis. Based on this property, we show that
\begin{equation}\label{1.7}
\begin{array}{ll}
&\ds\lim\limits_{n\rightarrow+\infty}\frac{\int_{\R^N}V(x)[\varphi(x-x_0)v_{c_n}(x-x_0)]^2}{c_n^2}\\[5mm]
&~~~~~~~~~~~~~~~~~\ds=\lim\limits_{n\rightarrow+\infty}(\frac{m_{c_n}}{c_n})^N\int_{\R^N}V(x+x_0)[\varphi(x)Q_p(\frac{m_{c_n}}{c_n}x)]^2=0,
\end{array}
\end{equation}
where $x_0\in\R^N$  and $\varphi\in C_0^{\infty}(\R^N)$ is a cut-off function. Then $\frac{i_V(c_n)}{c_n^2}-\frac{i_0(c_n)}{c_n^2}\rightarrow0~
\hbox{as}~n\rightarrow+\infty.$ By Theorem \ref{th1.2} (2) we get an essential estimate for $i_V(c_n)$, i.e.
\begin{equation}\label{1.11}
\lim\limits_{n\rightarrow+\infty}\frac{i_{V}(c_n)}{i_0(c_n)}=1.
 \end{equation}
 Indeed, \textbf{to show \eqref{1.7}, it requires that $\frac{m_{c_n}}{c_n}$ converges to either 0 or $+\infty$, which is the reason why $p\neq4$ is assumed in Theorem \ref{th1.3}} (see Theorem \ref{th1.2} (3)).
 Second, due to the effect of the nonlocal term, property \eqref{1.12} no longer holds for $i_0(c)$ and the minimizer of $i_0(c)$ (see Theorem \ref{th1.2} (1) above for details), which makes that the special $L^2$-scaling using in \cite{m} to get the mass concentration cannot be used here. We overcome this difficulty by using the estimate \eqref{1.11} to obtain the following optimal energy estimates for each minimizer $u_{c_n}$ of $i_V(c_n)$:
\begin{equation}\label{1.14}
\frac{\int_{\R^N}|\nabla u_{c_n}|^2}{m_{c_n}}\rightarrow1,~~~~~~~~~~~~~~~~~~
\frac{\int_{\R^N}|u_{c_n}|^{p}}{i_0(c_n)}\rightarrow-\frac{8p}{2N+8-Np}
\end{equation}
and
\begin{equation}\label{1.15}
\frac{\int_{\R^N}V(x)|u_{c_n}|^2}{i_0(c_n)}\rightarrow0
\end{equation}
as $n\rightarrow+\infty$, i.e. $\frac{I_0(u_{c_n})}{i_0(c_n)}\rightarrow1$, which gives us a cue that $u_{c_n}$ might behave like the minimizer of $i_0(c_n)$. It is necessary to point out that the nonlocal term plays an important role in the proof of \eqref{1.14} and \eqref{1.15}. Finally, for any $c\in T$, we set $w_{n}(x):=\frac{c}{c_n}u_{c_n}^{\frac{c_n m_c}{m_{c_n} c}}(x),$ by \eqref{1.14} \eqref{1.15} we see that $\{w_{n}\}$ is a bounded minimizing sequence of $i_{0}(c)$ and then $w_n$ converges strongly to the minimizer of $i_0(c)$ in $H^1(\R^N)$. So the theorem is proved.

Throughout this paper, we use standard notations. For simplicity, we
write $\int_{\Omega} h$ to mean the Lebesgue integral of $h(x)$ over
a domain $\Omega\subset\R^N$. $L^{p}:= L^{p}(\R^{N})~(1\leq
p\leq+\infty)$ is the usual Lebesgue space with the standard norm
$|\cdot|_{p}.$ We use `` $\rightarrow"$ and `` $\rightharpoonup"$ to denote the
strong and weak convergence in the related function space
respectively. $C$ will
denote a positive constant unless specified. We use `` $:="$ to denote definitions and $B_r(x):=\{y\in\R^N|\,|x-y|<r\}$. We denote a subsequence
of a sequence $\{u_n\}$ as $\{u_n\}$ to simplify the notation unless
specified.

The paper is organized as follows. In $\S$ 2, we prove Theorem \ref{th1.2}. In $\S$ 3, we prove Theorem \ref{th1.1} and Theorem \ref{th1.3}.

\section{Proof of Theorem \ref{th1.2}}
In this section, we will present an accurate description of $ i_0(c)=\inf\limits_{u\in\widetilde{ S}_c}I_0(u),$
where
$$
I_{0}(u)=\ds\frac{a}{2}\ds\int_{\R^N}|\nabla u|^2+\frac{b}{4}\left(\ds\int_{\R^N}|\nabla u|^2\right)^2
-\frac{1}{p}\ds\int_{\R^N}|u|^{p},~~~~u\in H^1(\R^N),
$$
$2<p<\frac{2N+8}N$ and $N\leq 3$. Recall that $i_0(c)$ has a minimizer if and only if $c\in T$, where $T$ is defined in \eqref{1.13}.

For simplicity, in what follows we denote
\begin{equation}\label{4.1}
D_1:=\frac{N(p-2)-4}{2N(p-2)},~~~~~~D_2:=\frac{2N+8-Np}{4N(p-2)}.
\end{equation}
Note that $D_2>0$ for each $2<p<\frac{2N+8}N$.

For any $u\in H^1(\R^N)$ and any $t>0$, in what follows we denote
\begin{equation}\label{2.1}
u^t(x):=t^{\frac N2}u(tx).
\end{equation}
Then $u^t\in \widetilde{S}_c$ if $u\in \widetilde{S}_c$.

By \eqref{1.5}\eqref{1.6} and the corresponding Pohozaev identity we see that
\begin{equation}\label{2.3}
\int_{\R^N}|\nabla Q_p|^2=\frac{2}{p}\int_{\R^N}|Q_p|^{p}=\int_{\R^N}|Q_p|^2.
\end{equation}
It is proved in \cite{gnn} that $Q_p$ is decreasing away from the origin and
$$Q_p(x),|\nabla Q_p(x)|=O(|x|^{-\frac12}e^{-|x|})~~~~~\hbox{as}~|x|\rightarrow+\infty.$$

The following lemma, which will be useful in the main proof, can be easily obtained, so we omit the proof.

\begin{lemma}\label{lem4.1}
Suppose that $v_c\in \widetilde{S}_c$ is a minimizer of $i_0(c)$, then $v_c$ is positive.
\end{lemma}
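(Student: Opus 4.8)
The plan is to reduce to a nonnegative minimizer, write down the Euler--Lagrange equation, and then apply the strong maximum principle after freezing the nonlocal coefficient as a positive constant.

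First I would replace $v_c$ by $|v_c|$. Since $|v_c|\in H^1(\R^N)$ with $|\nabla|v_c||\le|\nabla v_c|$ a.e., while $\big||v_c|\big|_2=|v_c|_2=c$ and $\int_{\R^N}\big||v_c|\big|^p=\int_{\R^N}|v_c|^p$, both the kinetic part $\frac a2\int_{\R^N}|\nabla u|^2+\frac b4(\int_{\R^N}|\nabla u|^2)^2$ (which is monotone in $\int_{\R^N}|\nabla u|^2$) and the potential part are controlled, so $I_0(|v_c|)\le I_0(v_c)=i_0(c)$. As $|v_c|\in\widetilde S_c$, equality holds and $|v_c|$ is again a minimizer; hence it suffices to prove positivity assuming $v_c\ge0$. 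Since $v_c$ minimizes $I_0$ on $\widetilde S_c$, there is a Lagrange multiplier $\mu\in\R$ with
$$-\Big(a+b\int_{\R^N}|\nabla v_c|^2\Big)\Delta v_c=\mu v_c+v_c^{p-1}\quad\text{in }\R^N.$$

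The crucial observation is that, once the minimizer is fixed, $A:=a+b\int_{\R^N}|\nabla v_c|^2$ is a fixed positive constant, so the equation is a genuinely local semilinear one, $-A\Delta v_c=\mu v_c+v_c^{p-1}$. Because $2<p<2^*$ is subcritical, a standard elliptic bootstrap gives $v_c\in W^{2,q}_{loc}(\R^N)\cap C^1(\R^N)$, and in particular $v_c$ is continuous and bounded on compact sets. Rewriting the equation as $-\Delta v_c+W(x)v_c=0$ with $W(x):=-A^{-1}(\mu+v_c^{p-2})\in L^\infty_{loc}(\R^N)$ (here $p>2$ keeps $v_c^{p-2}$ locally bounded), the strong maximum principle (equivalently, the Harnack inequality) applies: a nonnegative solution that is not identically zero cannot vanish at any interior point. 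Since $|v_c|_2=c>0$ forces $v_c\not\equiv0$, we conclude $v_c>0$ on all of $\R^N$.

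The only point that could look delicate is the nonlocal term $b(\int_{\R^N}|\nabla u|^2)\Delta u$, but it presents no obstacle here: for a fixed minimizer the quantity $\int_{\R^N}|\nabla v_c|^2$ is merely a number, the coefficient $A$ is constant, and the scalar strong maximum principle applies verbatim. This is why the lemma holds with an essentially routine proof.
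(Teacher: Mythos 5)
Your proof is correct, but there is nothing in the paper to compare it against: the authors explicitly omit the proof of Lemma 4.1, remarking only that it ``can be easily obtained.'' What you wrote is exactly the standard argument they are alluding to, and it is consistent with how the paper itself handles the nonlocal term two lines later --- in the proof of Lemma 4.2 the authors likewise freeze $a+b\int_{\R^N}|\nabla v_c|^2$ into a fixed positive constant before invoking uniqueness of $Q_p$, so your key observation that the Euler--Lagrange equation is genuinely local once the minimizer is fixed is precisely the mechanism at work throughout Section 2. All the individual steps are sound: the reduction to $|v_c|$ works because $|\nabla|v_c||\le|\nabla v_c|$ a.e.\ and $t\mapsto\frac{a}{2}t+\frac{b}{4}t^2$ is increasing on $[0,+\infty)$; the subcritical bootstrap is routine for $2<p<2^*$; and $W=-A^{-1}(\mu+v_c^{p-2})\in L^\infty_{loc}$ since $p>2$, so the strong maximum principle applies.

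One point of precision you should add: your reduction ``it suffices to prove positivity assuming $v_c\ge0$'' is not quite literal, since showing $|v_c|>0$ does not by itself show $v_c>0$ --- indeed $-v_c$ is also a minimizer, so the lemma can only hold up to sign (which is surely the paper's implicit convention). The one-line repair: $v_c$ itself solves its own frozen-coefficient equation with nonlinearity $|v_c|^{p-2}v_c$, so the same bootstrap makes $v_c$ continuous; since $|v_c|$ never vanishes, $v_c$ never vanishes, hence has constant sign by continuity, and after the customary normalization $v_c>0$ everywhere. With that remark inserted, your argument is a complete and correct proof of the lemma in the only sense in which it can be true.
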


\begin{lemma}\label{lem4.2} Let $2<p<\frac{2N+8}N$ and $c\in T$, suppose that $v_c\in \widetilde{S}_c$ is a minimizer of $i_0(c)$, then up to translations,
$$v_c(x)=\frac{c}{|Q_p|_2}Q_p^{\frac{m_c}{c}}(x),$$
where $m_c=(\frac{\sqrt{a^2D_1^2-4bD_2i_0(c)}+aD_1}{2bD_2})^{\frac12}$.
\end{lemma}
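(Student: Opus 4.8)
The plan is to collapse the $N$–dimensional constrained problem to a one–dimensional one via the sharp Gagliardo–Nirenberg inequality \eqref{1.5}, and then to exploit the rigidity of that inequality: equality in \eqref{1.5} is only attained by rescalings of $Q_p$. Write $\gamma:=\frac{N(p-2)}{4}$, so that $\gamma\in(0,2)$ for $2<p<\frac{2N+8}{N}$, and set $K:=\frac{1}{2|Q_p|_2^{p-2}}c^{\frac{2N-p(N-2)}{2}}$. Inserting \eqref{1.5} into $I_0$, every $u\in\widetilde S_c$ satisfies $I_0(u)\ge g\bigl(\int_{\R^N}|\nabla u|^2\bigr)$, where $g(s):=\frac a2 s+\frac b4 s^2-Ks^{\gamma}$, with equality exactly when $u$ saturates \eqref{1.5}. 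Conversely, using the $L^2$–preserving scaling \eqref{2.1}, the family $\frac{c}{|Q_p|_2}Q_p^{\,t}$ lies in $\widetilde S_c$, and \eqref{2.3} gives $\int_{\R^N}\bigl|\nabla\bigl(\frac{c}{|Q_p|_2}Q_p^{\,t}\bigr)\bigr|^2=c^2t^2$, whence $I_0\bigl(\frac{c}{|Q_p|_2}Q_p^{\,t}\bigr)=g(c^2t^2)$. Letting $t$ range over $(0,\infty)$ therefore yields $i_0(c)=\inf_{s>0}g(s)$.

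Next I would show that for $c\in T$ the infimum of $g$ is attained at a unique interior point $s_*=m_c^2>0$. Critical points of $g$ solve $\psi(s):=\frac a2 s^{1-\gamma}+\frac b2 s^{2-\gamma}=K\gamma$. For $\gamma\le 1$ (that is $2<p\le\frac{2N+4}{N}$) the function $\psi$ is strictly increasing, so there is at most one interior critical point, which is the global minimizer. For $1<\gamma<2$ (that is $\frac{2N+4}{N}<p<\frac{2N+8}{N}$) the map $\psi$ is strictly convex with $\psi(0^+)=\psi(+\infty)=+\infty$, so $g$ has at most two positive critical points $s_1<s_2$, a local maximum at $s_1$ and a local minimum at $s_2$; since $g(0)=0$, $g'(0^+)=\frac a2>0$ and $g\to+\infty$, the global infimum over $s>0$ is attained only at $s_2$, and it is $\le 0$ precisely in the regime $c\in T$ (cf. Proposition \ref{lem1.1} and \eqref{1.13}). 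In all cases the interior global minimizer $s_*$ is unique.

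With uniqueness of $s_*$ in hand, the rigidity argument is immediate. For the minimizer $v_c$ one has $i_0(c)=I_0(v_c)\ge g\bigl(\int_{\R^N}|\nabla v_c|^2\bigr)\ge\inf_{s>0}g(s)=i_0(c)$, so both inequalities are equalities. The second equality forces $\int_{\R^N}|\nabla v_c|^2=s_*=m_c^2$ by uniqueness of the interior minimizer, while the first forces $v_c$ to be an extremal of \eqref{1.5}. By Lemma \ref{lem4.1} we may take $v_c>0$, and since the positive extremals of \eqref{1.5} are exactly the rescalings $\alpha Q_p(\beta(\cdot-x_0))$, the two constraints $|v_c|_2=c$ and $\int_{\R^N}|\nabla v_c|^2=m_c^2$ pin down $\alpha$ and $\beta$, giving $v_c=\frac{c}{|Q_p|_2}Q_p^{\,m_c/c}$ up to translation.

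It remains to identify $m_c$ explicitly. Writing $s=m^2$, the minimizer $m=m_c$ of $\phi(m):=g(m^2)=\frac a2 m^2+\frac b4 m^4-Km^{2\gamma}$ satisfies $\phi'(m_c)=0$, i.e. $am_c^2+bm_c^4=2\gamma K m_c^{2\gamma}$, which lets me eliminate the $m^{2\gamma}$ term from $\phi(m_c)$. Using $\frac{\gamma-1}{2\gamma}=D_1$ and $\frac{\gamma-2}{4\gamma}=-D_2$ (from \eqref{4.1}), a short computation gives $i_0(c)=\phi(m_c)=aD_1m_c^2-bD_2m_c^4$; solving this quadratic in $m_c^2$ and noting that for $c\in T$ one has $i_0(c)\le 0$, so that the product of the two roots $i_0(c)/(bD_2)\le 0$ and exactly one root is positive, selects the $+$ branch and yields $m_c=\bigl(\frac{\sqrt{a^2D_1^2-4bD_2i_0(c)}+aD_1}{2bD_2}\bigr)^{1/2}$. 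The main obstacle is the analysis of $g$ in the non-convex range $1<\gamma<2$, where one must rule out the spurious local maximum and confirm that the global minimizing gradient level—hence the scaling parameter $t=m_c/c$—is unique; the branch selection in the final formula then follows automatically from the sign of $i_0(c)$.
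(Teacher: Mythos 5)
Your proposal is correct in substance, but it takes a genuinely different route from the paper. The paper argues at the PDE level: by the Lagrange multiplier theorem the minimizer $v_c$ solves the Kirchhoff equation \eqref{4.2}; combining this with the Pohozaev identity yields \eqref{4.3}--\eqref{4.5}, hence $i_0(c)=aD_1\int_{\R^N}|\nabla v_c|^2-bD_2\bigl(\int_{\R^N}|\nabla v_c|^2\bigr)^2$ and $\int_{\R^N}|\nabla v_c|^2=m_c^2$ directly from the quadratic \eqref{4.11}; it then rescales \eqref{4.2} into the normalized equation \eqref{4.10} and invokes the uniqueness (up to translations) of positive solutions of \eqref{1.6} to identify $v_c$. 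You instead reduce to the one-dimensional problem $i_0(c)=\inf_{s>0}g(s)$ via the sharp Gagliardo--Nirenberg inequality and then use the rigidity of its equality case. Both routes ultimately rest on the same deep input (your statement that the positive extremals of \eqref{1.5} are exactly $\alpha Q_p(\beta(\cdot-x_0))$ is Weinstein's characterization, which reduces to Kwong's uniqueness theorem), but the trade-offs differ: the paper's computation needs no shape analysis of $g$ at all, since the identity \eqref{4.11} holds automatically for the minimizer and the sign $i_0(c)\le 0$ alone selects the branch; moreover it produces the multiplier $\mu_c$ and the identity \eqref{4.6} that are reused in Corollary \ref{cor4.3} and throughout Section 3. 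Your route avoids the Pohozaev identity and the multiplier entirely, and gives the extra information that $i_0(c)=\inf_{s>0}g(s)$ with a unique optimal gradient level, at the cost of the full variational analysis of $g$ and a stronger citation than the equality statement recorded after \eqref{1.5} (equality \emph{for} $Q_p$, versus equality \emph{only for} rescalings of $Q_p$).

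Two local repairs are needed. First, for $1<\gamma<2$ your claim that $\psi(s)=\frac a2 s^{1-\gamma}+\frac b2 s^{2-\gamma}$ is strictly convex is false: $\psi''(s)=\frac{\gamma-1}{2}\,s^{-\gamma-1}\bigl[a\gamma-b(2-\gamma)s\bigr]$ changes sign at $s=\frac{a\gamma}{b(2-\gamma)}$. The conclusion you draw nevertheless survives, because $\psi'(s)=\frac{s^{-\gamma}}{2}\bigl[a(1-\gamma)+b(2-\gamma)s\bigr]$ has exactly one sign change (negative, then positive), so $\psi$ is strictly decreasing then strictly increasing and the level set $\{\psi=K\gamma\}$ still contains at most two points, with $g'$ positive--negative--positive exactly as your argument requires. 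Second, in the branch selection at the endpoint $c=c_*$ of $T$ (where $i_0(c_*)=0$, which occurs only for $\frac{2N+4}{N}<p<\frac{2N+8}{N}$), the product of the roots vanishes and the roots are $0$ and $aD_1/(bD_2)$; you should note that $D_1>0$ in this range and that $m_c^2=\int_{\R^N}|\nabla v_c|^2>0$ since $v_c\not\equiv 0$, which again forces the $+$ branch, consistent with $m_{c_*}=\sqrt{aD_1/(bD_2)}$ as in Lemma \ref{lem4.7}.
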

\begin{proof}~~By the Lagrange multiplier theorem, there exists $\mu_c\in\R$ such that $(v_c,\mu_c)$ satisfies the following equation
\begin{equation}\label{4.2}
-\left(a+b\int_{\R^N}|\nabla v_c|^2\right)\Delta v_c-|v_c|^{p-2}v_c=\mu_c v_c,~~~x\in\R^N.
\end{equation}
By Lemma 2.1 in \cite{ly1}, we see that $v_c$ satisfies the following Pohozaev identity
$$\frac{N-2}{2}\left[a\int_{\R^N}|\nabla v_c|^2+b\left(\int_{\R^N}|\nabla v_c|^2\right)^2\right]-\frac{N}{2}\mu_cc^2-\frac{N}{p}\int_{\R^N}|v_c|^{p}=0.$$
Hence
\begin{equation}\label{4.3}
a\int_{\R^N}|\nabla v_c|^2+b\left(\int_{\R^N}|\nabla v_c|^2\right)^2=\frac{N(p-2)}{2p}\int_{\R^N}|v_c|^{p}.
\end{equation}
So
\begin{equation}\label{4.4}
\mu_c=-\frac{2N-p(N-2)}{N(p-2)c^2}\left(a+b\int_{\R^N}|\nabla v_c|^2\right)\int_{\R^N}|\nabla v_c|^2
\end{equation}
and \begin{equation}\label{4.11}
i_0(c)=I_0(v_c)=aD_1\int_{\R^N}|\nabla v_c|^2-bD_2\left(\int_{\R^N}|\nabla v_c|^2\right)^2.
\end{equation}
 Since $D_2>0$ and $i_0(c)\leq0$, we have
\begin{equation}\label{4.5}
\int_{\R^N}|\nabla v_c|^2=\frac{\sqrt{a^2D_1^2-4bD_2i_0(c)}+aD_1}{2bD_2}=m_c^2.
\end{equation}
Therefore, by Lemma \ref{lem4.1} and \eqref{4.2}-\eqref{4.5} we see that $v_c\in \widetilde{S}_c$ is a positive solution of the following equation
$$-\frac{N(p-2)}4\Delta v_c+\frac{2N-p(N-2)}{4}(\frac{m_c}{c})^2 v_c=\frac{N(p-2)}{4(a+bm_c^2)}|v_c|^{p-2}v_c,~~~x\in\R^N.
$$
Set  $v_c(x):=\left[\frac{4a+4bm_c^2}{N(p-2)}\right]^{\frac{1}{p-2}}
\left(\frac{c}{m_c}\right)^{\frac{N(p-2)-4}{2(p-2)}}w^{\frac{m_c}{c}}(x),$ then $w$ satisfies the equation
\begin{equation}\label{4.10}-\frac{N(p-2)}4\Delta w+\frac{2N-p(N-2)}{4}w=|w|^{p-2}w,~~~x\in\R^N.
\end{equation}
By the fact that $w$ is positive together with the uniqueness of positive solutions (up to translations) to the equation \eqref{4.10}, we conclude that $w=Q_p$, i.e. $$v_c(x)=\left[\frac{4a+4bm_c^2}{N(p-2)}\right]^{\frac{1}{p-2}}
\left(\frac{c}{m_c}\right)^{\frac{N(p-2)-4}{2(p-2)}}Q_p^{\frac{m_c}{c}}(x).$$
By direct calculation, we see that
$$c^2=\int_{\R^N}|v_c|^2=\left[\frac{4a+4bm_c^2}{N(p-2)}\right]^{\frac{2}{p-2}}
\left(\frac{c}{m_c}\right)^{\frac{N(p-2)-4}{p-2}}|Q_p|_2^2,$$
then
\begin{equation}\label{4.12}
[\frac{4a+4bm_c^2}{N(p-2)}]^{\frac{1}{p-2}}(\frac{c}{m_c})^{\frac{N(p-2)-4}{2(p-2)}}=\frac{c}{|Q_p|_2}.
\end{equation}
Therefore, up to translations, $v_c=\frac{c}{|Q_p|_2}Q_p^{\frac{m_c}{c}}$ is the unique minimizer of $i_0(c).$
\end{proof}

\begin{corollary}\label{cor4.3}

(1)~~We conclude from the proof of Lemma \ref{lem4.2} that
$$i_0(c)=aD_1m_c^2-bD_2m_c^4$$ and
\begin{equation}\label{4.6}
a+bm_c^2=\frac{N(p-2)}{4}\frac{c^{\frac{2N-p(N-2)}{2}}}{|Q_p|_2^{p-2}}
m_c^{\frac{N(p-2)-4}{2}}.
\end{equation}

(2)~~There exists $\mu_c=-\frac{2N-p(N-2)}{4}\frac{c^{p-2-\frac{N(p-2)}{2}}}{|Q_p|_2^{p-2}}
m_c^{\frac{N(p-2)}{2}}<0$ such that $(\frac{c}{|Q_p|_2}Q_p^{\frac{m_c}{c}},\mu_c)$ is a couple of solution to the equation \eqref{4.2}.

(3)~~In particular, if $p=\frac{2N+4}N$, then we can give accurate expressions of $i_0(c)$ and $m_c$, i.e. $$i_0(c)=-\frac{1}{4b}\Big{[}(\frac{c}{|Q_{\frac{2N+4}{N}}|_2})^{\frac 4N}-a\Big{]}^2,$$ $$m_c=b^{-\frac12}\Big{[}(\frac{c}{|Q_{\frac{2N+4}{N}}|_2})^{\frac 4N}-a\Big{]}^{\frac12}$$
and
$$\mu_c=-\frac{2}{Nbc^2}(\frac{c}{|Q_{\frac{2N+4}{N}}|_2})^{\frac 4N}\Big{[}(\frac{c}{|Q_{\frac{2N+4}{N}}|_2})^{\frac 4N}-a\Big{]}.$$
\end{corollary}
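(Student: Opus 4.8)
The plan is to read all three assertions off the identities already obtained in the proof of Lemma \ref{lem4.2}; no new analysis is needed, so the corollary reduces to careful bookkeeping of the exponents of $c$ and $m_c$.

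For part (1), the identity $i_0(c)=aD_1m_c^2-bD_2m_c^4$ is immediate upon inserting \eqref{4.5}, namely $\int_{\R^N}|\nabla v_c|^2=m_c^2$, into the energy expression \eqref{4.11}. For the second identity \eqref{4.6} I would start from the normalization \eqref{4.12} and raise both sides to the power $p-2$; after clearing the constant $N(p-2)/4$ this puts $a+bm_c^2$ on the left and a product of powers of $c$ and $m_c$ on the right. The only delicate point is the exponent of $c$: one must verify the identity $(p-2)-\frac{N(p-2)-4}{2}=\frac{2N-p(N-2)}{2}$, a one-line simplification, in order to see that the powers collect into exactly the claimed form.

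For part (2), since Lemma \ref{lem4.2} already shows that $v_c=\frac{c}{|Q_p|_2}Q_p^{\frac{m_c}{c}}$ solves \eqref{4.2} with some Lagrange multiplier, the only task is to pin down the explicit value of $\mu_c$. I would substitute $\int_{\R^N}|\nabla v_c|^2=m_c^2$ into the Pohozaev-derived formula \eqref{4.4}, obtaining $\mu_c=-\frac{2N-p(N-2)}{N(p-2)c^2}(a+bm_c^2)m_c^2$, and then replace $a+bm_c^2$ by the right-hand side of \eqref{4.6}. Tracking the exponents once more, now using $\frac{N(p-2)-4}{2}+2=\frac{N(p-2)}{2}$ together with the same $c$-exponent identity as above, yields the stated closed form. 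The strict inequality $\mu_c<0$ then follows since $2N-p(N-2)>0$ throughout $2<p<2^*$ and $m_c>0$.

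For part (3), I would specialize to $p=\frac{2N+4}{N}$, where $p-2=\frac4N$, hence $N(p-2)=4$ so that $D_1=0$ and $m_c^{(N(p-2)-4)/2}=1$, while $D_2=\frac14$ and $2N-p(N-2)=\frac8N$. With $D_1=0$, formula \eqref{4.6} collapses to $a+bm_c^2=(c/|Q_{\frac{2N+4}{N}}|_2)^{4/N}$, which I solve for $m_c$; substituting this into $i_0(c)=-bD_2m_c^4=-\frac{b}{4}m_c^4$ and into the expression for $\mu_c$ from part (2) (using $c^{(4-2N)/N}/|Q|_2^{4/N}=c^{-2}(c/|Q|_2)^{4/N}$) produces the three displayed formulas. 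I do not expect any genuine obstacle: the whole corollary is a direct consequence of Lemma \ref{lem4.2}, and the only place demanding attention is the consistent manipulation of the fractional exponents of $c$ and $m_c$.
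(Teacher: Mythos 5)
Your proposal is correct and takes essentially the same route as the paper, whose proof of (1)--(2) consists precisely of reading the identities off \eqref{4.4}, \eqref{4.5}, \eqref{4.11} and \eqref{4.12} (raising \eqref{4.12} to the power $p-2$ to get \eqref{4.6}, then substituting into the Pohozaev formula for $\mu_c$), and whose proof of (3) specializes to $p=\frac{2N+4}{N}$ using $D_1=0$, $D_2=\frac14$ and solving \eqref{4.6} for $m_c^2$ exactly as you do. Your exponent verifications all check out, so there is no gap.
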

\begin{proof}~~
(1)(2) follow from \eqref{4.2}, \eqref{4.4}-\eqref{4.5} and \eqref{4.12}.

(3)~~When $p=\frac{2N+4}{N}$, $D_1=0$ and $D_2=\frac14.$ By \eqref{4.6} we see that $m_c^2=\frac1b[(\frac{c}{|Q_{\frac{2N+4}{N}}|_2})^{\frac4N}-a]$. So by (1)(2) we have
$i_0(c)=-\frac{b}4m_c^4=-\frac{1}{4b}[(\frac{c}{|Q_{\frac{2N+4}{N}|_2}})^{\frac4N}-a]^2$ and $$\mu_c=-\frac2{Nc^2}(\frac{c}{|Q_{\frac{2N+4}{N}}|_2})^{\frac4N}m_c^2=-\frac{2}{Nbc^2}(\frac{c}{|Q_{\frac{2N+4}{N}}|_2})^{\frac 4N}\Big{[}(\frac{c}{|Q_{\frac{2N+4}{N}}|_2})^{\frac 4N}-a\Big{]}.$$
\end{proof}
The following proposition follows from Theorem 1.1 in \cite{ye1} and Lemma \ref{lem4.2}.
\begin{proposition}\label{pro4.6}~~Let $2<p<\frac{2N+8}{N}$ and $c\in T$. Suppose that $\{u_n\}\subset\widetilde{S}_c$ is a minimizing sequence of $i_0(c)$, then there exists a subsequence of $\{u_n\}$ (still denoted by $\{u_n\}$), and $\{y_n\}\subset\R^N$ such that $$u_n(x+y_n)\rightarrow \frac{c}{|Q_p|_2}Q_p^{\frac{m_c}{c}}(x)~~~~~\hbox{in}~H^1(\R^N).$$
\end{proposition}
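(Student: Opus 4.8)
The plan is to derive Proposition \ref{pro4.6} by combining two ingredients: the compactness of minimizing sequences (up to translation), which is the engine behind the existence half of Proposition \ref{lem1.1}, i.e. \cite[Theorem 1.1]{ye1}, together with the uniqueness statement of Lemma \ref{lem4.2}, which pins the limit down to the explicit profile $\frac{c}{|Q_p|_2}Q_p^{\frac{m_c}c}$. In other words, compactness produces \emph{some} minimizer as a strong limit, and Lemma \ref{lem4.2} identifies it.

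First I would check that $\{u_n\}$ is bounded in $H^1(\R^N)$. Since $2<p<\frac{2N+8}N$ we have $\frac{N(p-2)}4<2$, so \eqref{1.5} gives, for every $u\in\widetilde{S}_c$,
$$I_0(u)\ge\frac a2\int_{\R^N}|\nabla u|^2+\frac b4\Big(\int_{\R^N}|\nabla u|^2\Big)^2-\frac{c^{\frac{2N-p(N-2)}2}}{2|Q_p|_2^{p-2}}\Big(\int_{\R^N}|\nabla u|^2\Big)^{\frac{N(p-2)}4}.$$
The right-hand side depends only on $t=\int_{\R^N}|\nabla u|^2$ and tends to $+\infty$ as $t\to+\infty$, because the quartic term dominates the power $t^{N(p-2)/4}$. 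Hence $I_0$ is coercive on $\widetilde{S}_c$; as $I_0(u_n)\to i_0(c)$ is finite, $\int_{\R^N}|\nabla u_n|^2$ stays bounded, and with $|u_n|_2=c$ this bounds $\{u_n\}$ in $H^1(\R^N)$.

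Next I would establish compactness modulo translations, following \cite{ye1}. Applying Lions' concentration-compactness lemma to $|u_n|^2$, I would rule out vanishing and dichotomy: vanishing would force $\int_{\R^N}|u_n|^p\to0$ and hence $\liminf_nI_0(u_n)\ge0$, contradicting $i_0(c)<0$; dichotomy would violate the strict subadditivity of $i_0$ proved in \cite{ye1}. What survives is concentration, so there exist $y_n\in\R^N$ and $v\in H^1(\R^N)\setminus\{0\}$ with $u_n(\cdot+y_n)\rightharpoonup v$ weakly in $H^1(\R^N)$ and strongly in $L^2(\R^N)\cap L^p(\R^N)$, whence $|v|_2=c$. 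Weak lower semicontinuity of the gradient gives $I_0(v)\le\liminf_nI_0(u_n(\cdot+y_n))=i_0(c)$, so $v$ is a minimizer, $\int_{\R^N}|\nabla u_n(\cdot+y_n)|^2\to\int_{\R^N}|\nabla v|^2$, and the convergence $u_n(\cdot+y_n)\to v$ is in fact strong in $H^1(\R^N)$.

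Finally I would identify $v$: being a minimizer of $i_0(c)$, Lemma \ref{lem4.2} yields $y_0\in\R^N$ with $v=\frac{c}{|Q_p|_2}Q_p^{\frac{m_c}c}(\cdot-y_0)$, and replacing $y_n$ by $y_n+y_0$ absorbs this translation and gives the asserted convergence. The main obstacle is the compactness step, and specifically the exclusion of dichotomy: because of the nonlocal term $\big(\int_{\R^N}|\nabla u|^2\big)^2$ the energy does not split additively under a partition of the mass, so the strict subadditivity of $c\mapsto i_0(c)$ is far more delicate than in the local Schr\"odinger case, which is precisely the content carried by \cite[Theorem 1.1]{ye1}. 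A secondary point of care is that the exclusion of vanishing uses $i_0(c)<0$, which holds throughout $T$ except at the endpoint $c=c_*$ of the range $\frac{2N+4}N<p<\frac{2N+8}N$, where $i_0(c_*)=0$; there a spreading sequence with $\int_{\R^N}|\nabla u_n|^2\to0$ is also minimizing, so this borderline value requires separate attention and one should invoke that the genuine minimizer has $\int_{\R^N}|\nabla v|^2=m_{c_*}^2>0$ by \eqref{4.5}.
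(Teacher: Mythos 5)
Your route is exactly the paper's: the paper disposes of Proposition \ref{pro4.6} in one line, citing Theorem 1.1 of \cite{ye1} for compactness of minimizing sequences modulo translations and Lemma \ref{lem4.2} to identify the limit, and your proposal simply unpacks the concentration-compactness argument that \cite{ye1} encapsulates (coercivity via \eqref{1.5} using $\frac{N(p-2)}{4}<2$, vanishing excluded by $i_0(c)<0$, dichotomy excluded by strict subadditivity, which in this paper appears as the strict monotonicity of $c\mapsto i_0(c)/c^2$ reproved in Lemma \ref{lem4.5}(1)). All of this is correct whenever $i_0(c)<0$, i.e. on the interior of $T$, and the final translation-absorption step is fine.

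The one genuine issue is the endpoint $c=c_*$ in the range $\frac{2N+4}{N}<p<\frac{2N+8}{N}$, where $c_*\in T$ but $i_0(c_*)=0$. You correctly spot that vanishing cannot then be excluded, but your proposed patch --- ``invoke that the genuine minimizer has $\int_{\R^N}|\nabla v|^2=m_{c_*}^2>0$'' --- is a non sequitur: it says nothing about an arbitrary minimizing sequence. In fact your own spreading sequence $u_n(x)=t_n^{N/2}v(t_nx)$, $t_n\to0$, $v\in\widetilde{S}_{c_*}$, is a genuine counterexample to the statement at $c=c_*$: it is minimizing since $I_0(u_n)\to0=i_0(c_*)$, yet $|\nabla u_n|_2\to0$, so no translates can converge strongly in $H^1(\R^N)$ to the profile $\frac{c}{|Q_p|_2}Q_p^{\frac{m_c}{c}}$, whose gradient norm is $m_{c_*}>0$. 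So no ``separate attention'' can close this case; the proposition is simply false there as stated for all minimizing sequences, and the correct hypothesis is $i_0(c)<0$ (equivalently $c>c_*$). The paper hides this behind the citation of \cite{ye1}, and it is harmless for its application --- in the proof of Theorem \ref{th1.3} one may take any fixed $c>c_*$ --- but your write-up should either restrict the claim to $c$ with $i_0(c)<0$ or state explicitly that the endpoint must be excluded, rather than suggest it can be repaired.
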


Based on Corollary \ref{cor4.3}, we could get the exact value of $c_*$ defined as in Proposition \ref{lem1.1} when $\frac{2N+4}{N}<p<\frac{2N+8}{N},$ which is not obtained in \cite{ye1}.
\begin{lemma}\label{lem4.7}~~If $\frac{2N+4}{N}<p<\frac{2N+8}{N},$ then $c_*=[\frac{4a|Q_p|_2^{p-2}}{2N+8-Np}\left(\frac{2a}{b}\frac{N(p-2)-4}{2N+8-Np}\right)^{\frac{4-N(p-2)}4}]^{\frac{2}{2N-p(N-2)}}$.
\end{lemma}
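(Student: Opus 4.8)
The plan is to pin down $c_*$ as the unique value of $c$ at which $i_0(c)$ changes sign, and then simply read off $m_{c_*}$ from the explicit formulas already established in Corollary \ref{cor4.3}. By Proposition \ref{lem1.1}, in the range $\frac{2N+4}{N}<p<\frac{2N+8}{N}$ one has $i_0(c)=0$ for all $0<c\le c_*$ and $i_0(c)<0$ for $c>c_*$; moreover $c_*\in T$, so a minimizer of $i_0(c_*)$ exists and $i_0(c_*)=0$. Since $\frac{2N+4}{N}<p$ forces $N(p-2)>4$, the constant $D_1$ from \eqref{4.1} is strictly positive. Evaluating the expression $m_c^2=\frac{\sqrt{a^2D_1^2-4bD_2i_0(c)}+aD_1}{2bD_2}$ at $c=c_*$, where $i_0(c_*)=0$, and using $\sqrt{a^2D_1^2}=aD_1$ (valid because $D_1>0$), I obtain the clean identity
$$m_{c_*}^2=\frac{aD_1}{bD_2}=\frac{2a}{b}\,\frac{N(p-2)-4}{2N+8-Np},$$
after substituting the values of $D_1$ and $D_2$.

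Next I would insert this value into relation \eqref{4.6} of Corollary \ref{cor4.3}, namely $a+bm_{c_*}^2=\frac{N(p-2)}{4}\,\frac{c_*^{(2N-p(N-2))/2}}{|Q_p|_2^{p-2}}\,m_{c_*}^{(N(p-2)-4)/2}$, and solve for $c_*$. The crucial simplification is on the left-hand side: a short computation gives
$$a+bm_{c_*}^2=a\Big(1+\frac{2(N(p-2)-4)}{2N+8-Np}\Big)=a\,\frac{N(p-2)}{2N+8-Np},$$
so the factor $N(p-2)$ cancels against the $\frac{N(p-2)}{4}$ appearing on the right. After this cancellation, \eqref{4.6} collapses to
$$\frac{4a}{2N+8-Np}=\frac{c_*^{(2N-p(N-2))/2}}{|Q_p|_2^{p-2}}\,\big(m_{c_*}^2\big)^{(N(p-2)-4)/4}.$$

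Finally I would isolate $c_*^{(2N-p(N-2))/2}$, substitute the value of $m_{c_*}^2$ found above (noting that its exponent $-(N(p-2)-4)/4=(4-N(p-2))/4$ matches exactly the exponent in the asserted formula), and raise to the power $\frac{2}{2N-p(N-2)}$. Here $2N-p(N-2)>0$ because $p<2^*$, so this root is well defined and $c_*$ is uniquely determined; the result is precisely the stated expression. The argument is essentially a routine computation: the only points that genuinely require care are justifying that $i_0(c_*)=0$ (which is what makes $m_{c_*}^2$ collapse to $aD_1/(bD_2)$) together with the positivity of $D_1$, and keeping the exponent bookkeeping consistent through the final extraction of the root. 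I do not anticipate any real analytic obstacle beyond these algebraic checks.
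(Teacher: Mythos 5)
Your proposal is correct and follows essentially the same route as the paper: both use Proposition \ref{lem1.1} to get $i_0(c_*)=0$ together with the existence of a minimizer at $c_*$, deduce $m_{c_*}^2=\frac{aD_1}{bD_2}$ (you via the explicit formula for $m_c$ with $D_1>0$, the paper via the equivalent identity $i_0(c)=aD_1m_c^2-bD_2m_c^4$ from Corollary \ref{cor4.3}(1)), and then solve \eqref{4.6} for $c_*$. Your algebra, including the simplification $a+bm_{c_*}^2=a\frac{N(p-2)}{2N+8-Np}$ and the final root extraction using $2N-p(N-2)>0$, checks out and reproduces the stated formula.
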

\begin{proof}~~By Proposition \ref{lem1.1} (1), when $\frac{2N+4}{N}<p<\frac{2N+8}{N},$ $i_0(c_*)=0$ and $i_0(c_*)$ has a minimizer. Then by Corollary \ref{cor4.3} (1) we see that $m_{c_*}=\sqrt{\frac{aD_1}{bD_2}}$. Hence we conclude from \eqref{4.6} and the definitions of $D_1,D_2$ that
$$c_*=\Big{[}\frac{4a|Q_p|_2^{p-2}}{2N+8-Np}\left(\frac{2a}{b}\frac{N(p-2)-4}{2N+8-Np}\right)^{\frac{4-N(p-2)}4}\Big{]}^{\frac{2}{2N-p(N-2)}}.$$
\end{proof}

Note that $2<4<\frac{2N+8}{N}$ when $N\leq3.$

\begin{lemma}\label{lem4.5}~~Let $2<p<\frac{2N+8}{N}$ and $c>c_*$, where $c_*$ is given in Proposition \ref{lem1.1} (1). Then

(1)~~$\lim\limits_{c\rightarrow+\infty}\frac{i_0(c)}{c^2}=-\infty$.

(2)~~If $p\neq4$, then either $\frac{m_c}{c}\rightarrow+\infty$ or $\frac{m_c}{c}\rightarrow0$ as $c\rightarrow+\infty$.

~~~~~~If $p=4$, then $\frac{m_c}{c}\rightarrow(\frac{N}{2b|Q_4|_2^2})^{\frac{1}{4-N}}$ as $c\rightarrow+\infty$.
\end{lemma}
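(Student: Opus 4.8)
My plan is to extract both statements from the two identities already available for the (unique) minimizer: the energy formula $i_0(c)=aD_1m_c^2-bD_2m_c^4$ of Corollary~\ref{cor4.3}(1) and the defining relation \eqref{4.6}. Writing $\alpha:=\frac{2N-p(N-2)}{2}$, $\beta:=\frac{N(p-2)-4}{2}$ and $K:=\frac{N(p-2)}{4|Q_p|_2^{p-2}}$, relation \eqref{4.6} becomes $a+bm_c^2=Kc^{\alpha}m_c^{\beta}$. Three arithmetic facts will be used throughout: $\alpha>0$ (since $p<2^*$), $2-\beta=\frac{2N+8-Np}{2}>0$ (since $p<\frac{2N+8}{N}$), and $\alpha+\beta=p-2$. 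Dividing \eqref{4.6} by $m_c^{\beta}$ shows that $m_c$ solves $h(m_c)=Kc^{\alpha}$ with $h(m):=am^{-\beta}+bm^{2-\beta}$. Everything will follow once I show that $m_c\to+\infty$ as $c\to+\infty$.

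The step $m_c\to+\infty$ is the only genuine difficulty, and I would prove it by contradiction. If some sequence $c_n\to+\infty$ had $m_{c_n}$ bounded, then after passing to a subsequence $m_{c_n}\to L\in[0,+\infty)$. Since $h$ is continuous on $(0,+\infty)$, if $L>0$ then $h(m_{c_n})\to h(L)<+\infty$, contradicting $h(m_{c_n})=Kc_n^{\alpha}\to+\infty$. The case $L=0$ can occur only when $\beta>0$, because for $\beta\le0$ one has $h(0^+)<+\infty$ and the same contradiction applies. To rule out $L=0$ when $\beta>0$, I invoke the hypothesis $c>c_*$: then $i_0(c)<0$ by Proposition~\ref{lem1.1}(1), and from $i_0(c)=m_c^2(aD_1-bD_2m_c^2)$ with $m_c>0$ we get $aD_1-bD_2m_c^2<0$; as $\beta>0$ forces $D_1>0$, this gives the uniform bound $m_c^2>\frac{aD_1}{bD_2}>0$, which is incompatible with $m_{c_n}\to0$. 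Hence $m_c\to+\infty$.

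With $m_c\to+\infty$ in hand, dividing \eqref{4.6} by $m_c^2$ and letting $c\to+\infty$ yields the single clean limit $\frac{m_c^{2-\beta}}{c^{\alpha}}\to\frac{K}{b}$. For part (2) I put $t_c:=\frac{m_c}{c}$ and rewrite this limit, using $m_c=t_cc$ and $(2-\beta)-\alpha=4-p$, as $t_c^{2-\beta}c^{4-p}\to\frac{K}{b}$. Since $2-\beta>0$: for $p<4$ the factor $c^{p-4}\to0$ forces $t_c\to0$; for $p>4$ it forces $t_c\to+\infty$; and for $p=4$, where $2-\beta=4-N\ge1$ and $K=\frac{N}{2|Q_4|_2^2}$, the relation becomes $t_c^{4-N}\to\frac{K}{b}$, giving $\frac{m_c}{c}\to(\frac{N}{2b|Q_4|_2^2})^{\frac{1}{4-N}}$, precisely the asserted constant.

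For part (1) I write $\frac{i_0(c)}{c^2}=(\frac{aD_1}{m_c^2}-bD_2)\frac{m_c^4}{c^2}$. The first factor tends to $-bD_2<0$ because $m_c\to+\infty$. For the second, raising $\frac{m_c^{2-\beta}}{c^{\alpha}}\to\frac{K}{b}$ to the power $\frac{4}{2-\beta}$ gives $\frac{m_c^4}{c^2}=(\frac{K}{b})^{\frac{4}{2-\beta}}c^{\frac{4\alpha}{2-\beta}-2}(1+o(1))$, whose exponent is positive since $4\alpha-2(2-\beta)=(4-N)(p-2)>0$ (here the hypotheses $N\le3$ and $p>2$ enter). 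Therefore $\frac{m_c^4}{c^2}\to+\infty$ and the product tends to $-\infty$, which is part (1). The main obstacle is thus the first step; after $m_c\to+\infty$ is established, the remaining work is a direct unwinding of \eqref{4.6}, the only subtlety being the use of $i_0(c)<0$ to discard the spurious small branch of $h$ when $\beta>0$.
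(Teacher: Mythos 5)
Your proof is correct, and it takes a genuinely different route from the paper's, although both pivot on the same key identity \eqref{4.6}. For part (1) the paper first shows that $c\mapsto \frac{i_0(c)}{c^2}$ is strictly decreasing on $(c_*,+\infty)$ (via the spatial scaling $u_n(\theta^{-2/N}x)$ applied to a minimizing sequence), so the limit exists in $[-\infty,0)$, and then rules out a finite limit $-A_0$: that would force $\frac{m_c}{c^{1/2}}\to A_1\in(0,+\infty)$, contradicting \eqref{4.6} rewritten as \eqref{4.8} because of the divergent factor $c^{\frac{(4-N)(p-2)}{4}}$; for part (2) the paper uses the rewriting \eqref{4.9} (equivalent to your relation $t_c^{2-\beta}c^{4-p}\to \frac{K}{b}$), but in the case $4<p<\frac{2N+8}{N}$ it only concludes the either/or dichotomy, and for $p=4$ it must invoke part (1) to get $\frac{m_c^2}{c}\to+\infty$. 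You instead establish $m_c\to+\infty$ directly from $h(m_c)=Kc^{\alpha}$, correctly noting the one delicate point: when $\beta>0$ (equivalently $D_1>0$) the equation has a spurious small branch, which you exclude by the uniform bound $m_c^2>\frac{aD_1}{bD_2}$ coming from $i_0(c)=m_c^2(aD_1-bD_2m_c^2)<0$ for $c>c_*$. From there the single asymptotic $\frac{m_c^{2-\beta}}{c^{\alpha}}\to\frac{K}{b}$ yields everything by exponent arithmetic, which I checked: $\alpha+\beta=p-2$, $2-\beta=\frac{2N+8-Np}{2}>0$, and $4\alpha-2(2-\beta)=(4-N)(p-2)>0$ for $N\le 3$, $p>2$. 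Your route buys several things: it avoids the monotonicity of $\frac{i_0(c)}{c^2}$ (which the paper imports from Lemma 2.5 of \cite{ye1}); it gives quantitative power-law rates, $m_c\sim \bigl(\frac{K}{b}\bigr)^{\frac{1}{2-\beta}}c^{\frac{\alpha}{2-\beta}}$ and hence $i_0(c)\sim -bD_2m_c^4$ with exponent strictly greater than $2$, so part (1) comes with a rate; it sharpens part (2) by pinning down which alternative occurs ($\frac{m_c}{c}\to 0$ for $2<p<4$, $\frac{m_c}{c}\to+\infty$ for $4<p<\frac{2N+8}{N}$), where the paper leaves an either/or in the second subcase; and it decouples part (2) from part (1), whereas the paper's $p=4$ case depends on (1). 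The paper's approach, in exchange, records the strict monotonicity of $\frac{i_0(c)}{c^2}$, a fact of independent interest reused implicitly in its presentation.
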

\begin{proof}
(1)~~It is proved in Lemma 2.5 of \cite{ye1} that $c\mapsto\frac{i_0(c)}{c^2}$ is strictly decreasing on $(c_*,+\infty)$. For readers' convenience, we give a detailed proof of this fact here.

For any $c_1,c_2\in (c_*,+\infty)$ with $c_1<c_2$, let $\{u_n\}\subset \widetilde{S}_{c_1}$ be a minimizing sequence for $i_0(c_1)$. By Proposition \ref{lem1.1} (1), we have $i_0(c_1)<0$. Then there exist $0<k_1<k_2$ independent of $n$ such that
$$k_1\leq\int_{\R^N}|\nabla u_n|^2\leq k_2.$$
Set $\theta:=\frac{c_2}{c_1}>1$ and $u_{n,\theta}(x):=u_n(\theta^{-\frac{2}{N}}x).$ Then $u_{n,\theta}\in \widetilde{S}_{\theta c_1}$ and we have that
$$\begin{array}{ll}
I_0(u_{n,\theta})&=\ds\theta^2I_0(u_n)+\theta^2\left[(\theta^{-\frac{4}{N}}-1) \frac{a\int_{\R^N}|\nabla u_n|^2}{2}+(\theta^{2-\frac{8}{N}}-1)\frac{b(\int_{\R^N}|\nabla u_n|^2)^2}4\right]\\[5mm]
&\leq\ds\theta^2I_0(u_n)-\ds\theta^2\left[(1-\theta^{-\frac{4}{N}})\frac{ak_1}{2}+(1-\theta^{2-\frac{8}{N}})\frac{bk_1^2}{4}\right].
\end{array}$$
Let $n\rightarrow+\infty$ and notice that the second term of r.h.s. above is strictly negative and independent of $n$, it follows that $i_{0}(\theta c_1)<\theta^2 i_{0}(c_1),$ i.e.
$$
\frac{i_0(c_2)}{c_2^2}<\frac{ i_0(c_1)}{c_1^2}.
$$
So the function $c\mapsto\frac{i_0(c)}{c^2}$ is strictly decreasing on $(c_*,+\infty)$.

To prove the lemma, since $\frac{i_0(c)}{c^2}<0$ for all $c>c_*$, we see that either $\frac{i_0(c)}{c^2}\rightarrow-\infty$ or $\frac{i_0(c)}{c^2}\rightarrow-A_0$ for some $A_0>0$ as $c\rightarrow+\infty.$ If $\frac{i_0(c)}{c^2}\rightarrow-A_0$ for some $A_0>0$, then by the definition of $m_c$, we see that $\frac{m_c}{c^{\frac12}}\rightarrow A_1$ for some constant $A_1>0$. By \eqref{4.6} we have
\begin{equation}\label{4.8}
\begin{array}{ll}
\ds\frac{a+bm_c^2}{c}=\frac{N(p-2)}{4|Q_p|_2^{p-2}}c^{\frac{(4-N)(p-2)}4}(\frac{m_c}{c^\frac12})^{\frac{N(p-2)-4}{2}}.
\end{array}
\end{equation}
Since $N\leq 3$ and $p>2$, letting $c\rightarrow+\infty$ in \eqref{4.8} we get that $bA_1^2=+\infty$, which is impossible. So $\frac{i_0(c)}{c^2}\rightarrow-\infty$ as $c\rightarrow+\infty.$

(2)~~Similarly to \eqref{4.8}, by \eqref{4.6} we see that $\frac{a+bm_c^2}{c^2}=\frac{N(p-2)}{4|Q_p|_2^{p-2}}c^{p-4}(\frac{m_c}{c})^{\frac{N(p-2)-4}{2}},$ i.e.
\begin{equation}\label{4.9}
\begin{array}{ll}
\ds\frac{a}{c^2}(\frac{m_c}{c})^{\frac{2N+4-Np}{2}}+b(\frac{m_c}{c})^{\frac{2N+8-Np}{2}}=\frac{N(p-2)}{4|Q_p|_2^{p-2}}c^{p-4}.
\end{array}
\end{equation}
Let us consider the following two cases.

Case 1: $p\neq4$.

If $2<p<4$, then $\frac{2N+8-Np}{2}>0$ and it follows from \eqref{4.9} that
$$b(\frac{m_c}{c})^{\frac{2N+8-Np}{2}}\leq\frac{N(p-2)}{4|Q_p|_2^{p-2}}c^{p-4}\rightarrow0~~~\hbox{as}~c\rightarrow+\infty,$$
which implies that $\lim\limits_{c\rightarrow+\infty}\frac{m_c}{c}=0$. If $4<p<\frac{2N+8}{N}$, then we conclude from \eqref{4.9} again that either $\frac{a}{c^2}(\frac{m_c}{c})^{\frac{2N+4-Np}{2}}\rightarrow+\infty$ or $b(\frac{m_c}{c})^{\frac{2N+8-Np}{2}}\rightarrow+\infty$ as $c\rightarrow+\infty$, which both imply either $\lim\limits_{c\rightarrow+\infty}\frac{m_c}{c}=+\infty$ or $\lim\limits_{c\rightarrow+\infty}\frac{m_c}{c}=0$.

Case 2: $p=4$.

If $p=4$, then \eqref{4.9} is simplified to be
$$ \frac{N}{2|Q_4|_2^{2}}=\ds\frac{a}{c^2}(\frac{m_c}{c})^{2-N}+b(\frac{m_c}{c})^{4-N}=\left(\frac{a}{m_c^2}+b\right)(\frac{m_c}{c})^{4-N},
$$
i.e. $$(\frac{m_c}{c})^{4-N}=\frac{N}{2|Q_4|_2^{2}(\frac{a}{c}\frac{c}{m_c^2}+b)}.$$
By (1) and the definition of $m_c$, we see that $\lim\limits_{c\rightarrow+\infty}\frac{m_c^2}{c}=+\infty$. Then $\frac{m_c}{c}\rightarrow(\frac{N}{2b|Q_4|_2^2})^{\frac{1}{4-N}}$ as $c\rightarrow+\infty$. Therefore the lemma is proved.
\end{proof}

\begin{lemma}\label{lem4.4}~~Let $2<p<\frac{2N+8}{N},$ then $\left\{
 \begin{array}{ll}
  i_0(c)\rightarrow-\infty,\,\,\, &  \vspace{0.2cm}\\
 m_c\rightarrow+\infty,\,\,\, &  \vspace{0.2cm}\\
  \mu_c\rightarrow-\infty,\,\,\, &
 \end{array}
 \right.$  as $c\rightarrow+\infty$.
\end{lemma}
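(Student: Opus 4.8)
The plan is to read off all three limits from the explicit formulas already established in Lemma~\ref{lem4.2} and Corollary~\ref{cor4.3}, combined with the single analytic input $\frac{i_0(c)}{c^2}\to-\infty$ from Lemma~\ref{lem4.5}~(1). No new estimate is really needed; the work is just to chain these together, taking a little care with the Lagrange multiplier.

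First I would settle $i_0(c)\to-\infty$. Since $c\to+\infty$ we eventually have $c>c_*$, so Lemma~\ref{lem4.5}~(1) applies: given $M>0$ there is $c_M$ with $\frac{i_0(c)}{c^2}<-M$ for all $c\geq c_M$, and hence $i_0(c)<-Mc^2\leq -M$ once $c\geq\max\{c_M,1\}$. This gives $i_0(c)\to-\infty$. Next, for $m_c\to+\infty$ I would use the identity $i_0(c)=aD_1m_c^2-bD_2m_c^4$ from Corollary~\ref{cor4.3}~(1), in which $b,D_2>0$. If $m_c$ remained bounded along some sequence $c_n\to+\infty$, the right-hand side would stay bounded, contradicting the limit just proved; hence $m_c\to+\infty$.

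Finally, for $\mu_c\to-\infty$ the cleanest route is through \eqref{4.4} rather than the closed form in Corollary~\ref{cor4.3}~(2). Substituting $\int_{\R^N}|\nabla v_c|^2=m_c^2$ from \eqref{4.5} into \eqref{4.4} gives $\mu_c=-\frac{2N-p(N-2)}{N(p-2)}\cdot\frac{(a+bm_c^2)m_c^2}{c^2}$. Since $m_c\to+\infty$, I compare the numerator with $-i_0(c)/D_2=bm_c^4-\frac{aD_1}{D_2}m_c^2$: the ratio $\frac{(a+bm_c^2)m_c^2}{-i_0(c)/D_2}=\frac{bm_c^4+am_c^2}{bm_c^4-\frac{aD_1}{D_2}m_c^2}\to 1$. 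Writing $\alpha:=2N-p(N-2)>0$ (positive for all admissible $N,p$ since $p<2^*$), this yields $\mu_c=\frac{\alpha}{N(p-2)D_2}\,(1+o(1))\,\frac{i_0(c)}{c^2}$. As the constant $\frac{\alpha}{N(p-2)D_2}$ is positive and $\frac{i_0(c)}{c^2}\to-\infty$ by Lemma~\ref{lem4.5}~(1), we conclude $\mu_c\to-\infty$.

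The step demanding the most care is the last one, and precisely for this reason I route $\mu_c$ through \eqref{4.4}: doing so expresses $\mu_c$ in terms of $\frac{i_0(c)}{c^2}$ and lets me avoid any case split on the growth rate of $m_c/c$ coming from Lemma~\ref{lem4.5}~(2). Consequently the conclusion holds uniformly in $p$, including the borderline value $p=4$ that is otherwise singled out in Lemma~\ref{lem4.5}~(2).
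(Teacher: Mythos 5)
Your proof is correct and follows essentially the same route as the paper: $i_0(c)\rightarrow-\infty$ from Lemma \ref{lem4.5}~(1), then $m_c\rightarrow+\infty$ from the identity $i_0(c)=aD_1m_c^2-bD_2m_c^4$ of Corollary \ref{cor4.3}~(1), then $\mu_c$ via \eqref{4.4}--\eqref{4.5} rewritten as a positive factor times $\frac{i_0(c)}{c^2}$. The only cosmetic difference is that the paper carries the exact factor $\frac{a+bm_c^2}{bD_2m_c^2-aD_1}$ where you write $\frac{1}{D_2}\left(1+o(1)\right)$, which is the same limit.
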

\begin{proof}~~By Lemma \ref{lem4.5} (1), one easily sees that $\lim\limits_{c\rightarrow+\infty}i_0(c)=-\infty$. Then by Corollary \ref{cor4.3} (1) we have $m_c\rightarrow+\infty$ as $c\rightarrow+\infty$. Hence by Corollary \ref{cor4.3} (1)(2) and \eqref{4.6} we see that
$$\begin{array}{ll}
\ds\mu_c&=\ds-\frac{2N-p(N-2)}{N(p-2)}\frac{am_c^2+bm_c^4}{c^2}\\[5mm]
&=\ds\frac{2N-p(N-2)}{N(p-2)}\frac{a+bm_c^2}{bD_2m_c^2-aD_1}\frac{i_0(c)}{c^2}\rightarrow-\infty
\end{array}$$
as $c\rightarrow+\infty$.
\end{proof}

\noindent $\textbf{Proof of Theorem \ref{th1.2}}$\,\,\

\begin{proof}~~Theorem \ref{th1.2} follows directly from Lemmas \ref{lem4.2}, \ref{lem4.5} and Corollary \ref{cor4.3}.

\end{proof}

\section{Proof of Theorems \ref{th1.1} and \ref{th1.3}}

In this section, we first consider the minimization problem \eqref{1.3} when $V(x)$ satisfies condition $(V)$. We need the following compactness result, see e.g. \cite{bw}.

\begin{lemma}\label{lem3.1}~~
Suppose that $V(x)\in L^{\infty}_{loc}(\R^N)$ with $\lim\limits_{|x|\rightarrow +\infty}V(x)=+\infty$. Then the embedding $\mathcal{H}\hookrightarrow L^q(\R^N),~2\leq q<2^*$ is compact.
\end{lemma}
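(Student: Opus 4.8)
The plan is to verify sequential compactness directly: given any bounded sequence $\{u_n\}$ in $\mathcal{H}$, I would extract a subsequence converging strongly in $L^q(\R^N)$ for each fixed $q\in[2,2^*)$. Since $\|u\|_{H^1(\R^N)}\leq\|u\|_{\mathcal{H}}$, the sequence is bounded in $H^1(\R^N)$, so after passing to a subsequence I may assume $u_n\rightharpoonup u$ weakly in $H^1(\R^N)$; by the Rellich--Kondrachov theorem on balls, $u_n\to u$ strongly in $L^q(B_R(0))$ for every fixed $R>0$ and every $q\in[2,2^*)$. The whole difficulty is therefore to rule out mass escaping to infinity, that is, to establish uniform smallness of the tails, and I expect this to be the heart of the argument.

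This is exactly where the hypothesis $V(x)\to+\infty$ enters. Set $V_R:=\inf_{|x|\geq R}V(x)$; by assumption $V_R\to+\infty$ as $R\to+\infty$. For the $L^2$-tails I would estimate
$$\int_{|x|\geq R}|u_n|^2\leq\frac{1}{V_R}\int_{|x|\geq R}V(x)|u_n|^2\leq\frac{1}{V_R}\int_{\R^N}V(x)|u_n|^2\leq\frac{M^2}{V_R},$$
where $M:=\sup_n\|u_n\|_{\mathcal{H}}$. Thus the $L^2$-tails tend to $0$ as $R\to+\infty$, uniformly in $n$, and the same bound applies to the weak limit $u$. This uniform tightness is the crucial consequence of the coercivity of $V$, and it is precisely what distinguishes $\mathcal{H}$ from plain $H^1(\R^N)$, where the embedding into $L^q$ is merely continuous.

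To pass from $q=2$ to a general $q\in(2,2^*)$ I would interpolate. Writing $\frac1q=\frac{\theta}2+\frac{1-\theta}{2^*}$ with $\theta\in(0,1]$, Hölder's inequality gives
$$\int_{|x|\geq R}|u_n|^q\leq\Big(\int_{|x|\geq R}|u_n|^2\Big)^{\theta q/2}\Big(\int_{|x|\geq R}|u_n|^{2^*}\Big)^{(1-\theta)q/2^*}.$$
Since $\{u_n\}$ is bounded in $L^{2^*}(\R^N)$ by the Sobolev embedding (for $N=1,2$, where $2^*=+\infty$, one instead fixes a finite $r>q$, uses boundedness in $L^r(\R^N)$, and replaces $2^*$ by $r$), the second factor is bounded while the first tends to $0$ uniformly by the previous step; hence the $L^q$-tails vanish uniformly in $n$, and likewise for $u$.

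Finally I would assemble the pieces. Given $\varepsilon>0$, choose $R$ so large that $\int_{|x|\geq R}|u_n|^q$ and $\int_{|x|\geq R}|u|^q$ are each below $\varepsilon$ for all $n$; then
$$\int_{\R^N}|u_n-u|^q\leq C\int_{B_R(0)}|u_n-u|^q+C\Big(\int_{|x|\geq R}|u_n|^q+\int_{|x|\geq R}|u|^q\Big),$$
and letting $n\to+\infty$ kills the first term by the local strong convergence, leaving only the arbitrarily small tail contribution. This yields $u_n\to u$ in $L^q(\R^N)$ and proves compactness of the embedding. The only delicate step is the uniform tail control, and that step is exactly where the assumption $\lim_{|x|\to+\infty}V(x)=+\infty$ is indispensable.
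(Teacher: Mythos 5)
Your proof is correct, and it is in fact more than the paper supplies: the paper gives no proof of this lemma at all, quoting it as a known compactness result with the citation ``see e.g.\ \cite{bw}'' (Bartsch--Wang). Your argument --- weak $H^1$ convergence, Rellich--Kondrachov on balls, uniform tail control via the Chebyshev-type estimate $\int_{|x|\geq R}|u_n|^2\leq V_R^{-1}\int_{\R^N}V(x)|u_n|^2$ with $V_R=\inf_{|x|\geq R}V\to+\infty$, then interpolation to reach $2<q<2^*$ (with the correct replacement of $2^*$ by a finite $r$ when $N=1,2$) --- is precisely the standard proof of the cited result, and the assembly step at the end is sound, including the Fatou-type transfer of the tail bound to the weak limit $u$. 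So nothing is missing, and you correctly identify the coercivity of $V$ as the sole ingredient ruling out loss of mass at infinity.

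One small point of hygiene: both your inequality $\|u\|_{H^1(\R^N)}\leq\|u\|_{\mathcal{H}}$ and the step $\int_{|x|\geq R}V(x)|u_n|^2\leq\int_{\R^N}V(x)|u_n|^2$ implicitly use $V\geq 0$, which the lemma's hypotheses do not literally state. In the paper's setting this is harmless, since condition $(V)$ gives $\inf_{x\in\R^N}V(x)=0$; and in general $V\in L^{\infty}_{loc}(\R^N)$ together with $V(x)\rightarrow+\infty$ forces $V$ to be bounded below, so one may replace $V$ by $V+C_0$ for a suitable constant $C_0$ without changing $\mathcal{H}$, its topology, or the conclusion. Worth a sentence in a careful write-up, but not a gap.
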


\noindent $\textbf{Proof of Theorem \ref{th1.1}}$\,\,\

\begin{proof}~~(1)~~Since $V(x)\geq0$, we have $I_V(u)\geq I_0(u)$ for any $u\in \mathcal{H}.$ Then $i_V(c)\geq i_0(c)$ for each $c\in \widetilde{T},$ where
$$
\widetilde{T}=\left\{
 \begin{array}{ll}
 (0,+\infty),\,\,\, &~2<p<\frac{2N+8}{N}, \vspace{0.2cm}\\
 (0,(\frac{b}{2}|Q_{\frac{2N+8}{N}}|_2^{\frac{8}{N}})^{\frac{N}{8-2N}}],\,\,\, &~p=\frac{2N+8}{N}.
 \end{array}
 \right.
$$
 So we see from Proposition \ref{lem1.1} that $i_V(c)$ is well defined for $c\in \widetilde{T}$.

For any $c\in \widetilde{T}$, let $\{u_n\}\subset S_c$ be a minimizing sequence for $i_V(c)$, i.e. $I_V(u_n)\rightarrow i_V(c)$ as $n\rightarrow+\infty$. By Lemma \ref{lem3.1}, to prove that $i_V(c)$ has a minimizer it is enough to prove that $\{u_n\}$ is uniformly bounded in $\mathcal{H}$.

Indeed, if $\{u_n\}$ is uniformly bounded in $\mathcal{H}$, then up to a subsequence, there exists $u_c\in \mathcal{H}$ such that $u_n\rightharpoonup u_c$ in $\mathcal{H}$. Lemma \ref{lem3.1} shows that $u_n\rightarrow u_c$ in $L^q(\R^N)$, $2\leq q<2^*$, which implies that $|u_c|_2=c$, i.e. $u_c\in S_c$. By the weak lower semicontinuity of the norm in $\mathcal{H}$, we have $i_V(c)\leq I_V(u_c)\leq \liminf\limits_{n\rightarrow+\infty}I_V(u_n)=i_V(c)$, i.e. $u_c$ is a minimizer for $i_V(c)$. So $i_V(c)$ has at least one minimizer for each $c\in \widetilde{T}$.

 Let us next prove the boundedness of $\{u_n\}$. If $2<p<\frac{2N+8}{N}$, for any $c>0$, since $I_V(u_n)\rightarrow i_V(c)$, by \eqref{1.5} we have for $n$ large enough
 \begin{equation}\label{3.2}
\begin{array}{ll}
\ds i_V(c)+1+\frac{c^{\frac{2N-p(N-2)}{2}}}{2|Q_p|_2^{p-2}}\left(\int_{\R^N}|\nabla u_n|^2\right)^{\frac{N(p-2)}{4}}
&\geq\ds I_V(u_n)+\frac{c^{\frac{2N-p(N-2)}{2}}}{2|Q_p|_2^{p-2}}\left(\int_{\R^N}|\nabla u_n|^2\right)^{\frac{N(p-2)}{4}}\\[5mm]
&\geq\ds\frac{b}{4}\left(\int_{\R^N}|\nabla u_n|^2\right)^2+\frac12\int_{\R^N}V(x)u_n^2,
\end{array}
\end{equation}
which and $0<\frac{N(p-2)}{4}<2$ imply that $\{u_n\}$ is uniformly bounded in $\mathcal{H}$.

If $p=\frac{2N+8}{N}$, since $0<c\leq(\frac{b}{2}|Q_{\frac{2N+8}{N}}|_2^{\frac{8}{N}})^{\frac{N}{8-2N}},$ we deduce from \eqref{1.5} that $$\frac{N}{2N+8}\int_{\R^N}|u_{n}|^{\frac{2N+8}{N}}\leq \frac{b}{4}\left(\int_{\R^N}|\nabla u_{n}|^2\right)^2.$$
Then for $n$ large enough, we see that
$$i_V(c)+1\geq I_V(u_{n})\geq\frac{a}{2}\int_{\R^N}|\nabla u_{n}|^2+\frac12\int_{\R^N}V(x)u_n^2,$$
which implies that $\{u_n\}$ is uniformly bounded in $\mathcal{H}$.

Since $i_V(c)$ has a minimizer $u_c$, by the Lagrange multiplier theorem there exists $\rho_c\in\R$ such that $(u_c,\rho_c)\in S_c\times\R$ is a couple of solution to the equation \eqref{1.1}.

(2)~~Let $\varphi\in C_0^{\infty}(\R^N)$ be a radial cut-off function such that $0\leq\varphi\leq1$, $\varphi\equiv1$ on $B_1(0)$, $\varphi\equiv0$ on $\R^N\backslash B_2(0)$ and $|\nabla\varphi|\leq2$.

For each $c>(\frac{b}{2}|Q_{\frac{2N+8}{N}}|_2^{\frac{8}{N}})^{\frac{N}{8-2N}}$ if $p=\frac{2N+8}{N}$ or $c>0$ if $\frac{2N+8}{N}<p<2^*$, set
\begin{equation}\label{3.24}
u_{t}(x):=\frac{c A_{t}}{|Q_{p}|_2}\varphi(x)t^{\frac{N}{2}}Q_{p}(tx),~~~~\forall~t>0,
\end{equation}
where $A_t>0$ is chosen to satisfy that $u_{t}\in S_c$. In fact, By direct calculations and the Dominated Convergence Theorem  we have $$\frac{c^2}{A_{t}^2}=c^2+\frac{c^2}{|Q_{p}|_2^2}
\int_{\R^N}[\varphi^2(\frac{x}{t})-1]Q_{p}^2(x)\rightarrow c^2~~~\hbox{as}~t\rightarrow+\infty,$$
 i.e. $\lim\limits_{t\rightarrow+\infty}A_t=1.$ Since $V(x)\varphi^2(x)$ is bounded and has compact support, we see that $\lim\limits_{t\rightarrow+\infty}\int_{\R^N}V(x)|u_{t}|^2=V(0)c^2$. So by \eqref{2.3} and the exponential decay of $Q_{p}$ and $|\nabla Q_{p}|$,  we see that if $p=\frac{2N+8}{N},$ then
 $$
 \begin{array}{ll}
i_V(c)&\leq I_V(u_t)\\[5mm]
&=\ds\frac{ac^2A_t^2}{2}t^2-\Big{[}\Big{(}\frac{c}{(\frac{b}{2}|Q_{\frac{2N+8}{N}}|_2^{\frac{8}{N}})^{\frac{N}{8-2N}}}\Big{)}^{\frac{8-2N}{N}}-1+o_t(1)\Big{]}\frac{bc^4A_t^4}{4}t^4+\ds\frac{V(0)c^2}{2}+o_t(1)\\[5mm]
 &\rightarrow-\infty~~~~\hbox{as}~t\rightarrow+\infty
 \end{array}
 $$
 and if $\frac{2N+8}{N}<p<2^*$, then
 $$
 i_V(c)\leq I_V(u_t)=\ds\frac{a(cA_t)^2 t^2}{2}+\frac{b(cA_t)^4t^4}4-\frac{(cA_t)^p t^{\frac{N(p-2)}2}}{2|Q_p|_2^{p-2}}+\frac{V(0)c^2}{2}+o_t(1)\rightarrow-\infty~~\hbox{as}~t\rightarrow+\infty,
$$
since $\frac{N(p-2)}{2}>4$, where $\lim\limits_{t\rightarrow+\infty}o_t(1)=0$. Then $i_V(c)=-\infty$. So the theorem is proved.
\end{proof}

Next we give some preliminary lemmas to prove Theorem \ref{th1.3}.

\begin{lemma}\label{lem3.3}~~Let $2<p<\frac{2N+8}{N}$, $c>0$ and $V(x)$ satisfy $(V)$.

(1)~~The function $c\mapsto i_V(c)$ is continuous on $(0,+\infty).$

(2)~~$i_V(c)\rightarrow 0$ as $c\rightarrow 0^+$.

(3)~~If $p\neq 4$, then $\frac{i_V(c)}{i_0(c)}\rightarrow1$ as $c\rightarrow+\infty$.
\end{lemma}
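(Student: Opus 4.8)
The three parts are essentially independent, and I would treat them in turn, reserving the bulk of the effort for part (3). For part (1), to prove $c\mapsto i_V(c)$ is continuous at a fixed $c_0\in(0,+\infty)$ I would exploit the scalar dilation $u\mapsto\frac{c}{c_0}u$, which maps $S_{c_0}$ bijectively onto $S_c$. For upper semicontinuity, fix $u\in S_{c_0}$ and test with $\frac{c}{c_0}u\in S_c$; since
$$I_V\Big(\tfrac{c}{c_0}u\Big)=\tfrac12\big(\tfrac{c}{c_0}\big)^2\!\int_{\R^N}(a|\nabla u|^2+V|u|^2)+\tfrac b4\big(\tfrac{c}{c_0}\big)^4\Big(\int_{\R^N}|\nabla u|^2\Big)^2-\tfrac1p\big(\tfrac{c}{c_0}\big)^p\!\int_{\R^N}|u|^p$$
depends continuously on $c$ for fixed $u$, infimizing over $u$ gives $\limsup_{c\to c_0}i_V(c)\le i_V(c_0)$. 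For lower semicontinuity I would run the same dilation on a minimizer $u_c$ of $i_V(c)$ (which exists by Theorem~\ref{th1.1}); the key point is that $\int|\nabla u_c|^2$, $\int Vu_c^2$ and $\int|u_c|^p$ stay uniformly bounded for $c$ near $c_0$, which follows from the coercivity estimate \eqref{3.2} together with \eqref{1.5}, so the dilation perturbs $I_V$ by $o(1)$ and $i_V(c_0)\le I_V(\tfrac{c_0}{c}u_c)=i_V(c)+o(1)$.

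For part (2), a single fixed profile gives the upper bound: take $\phi\in C_0^\infty(\R^N)$ with $|\phi|_2=1$ and test with $c\phi\in S_c$, so $i_V(c)\le I_V(c\phi)\to0$ as $c\to0^+$. For the lower bound I would use $i_V(c)\ge i_0(c)$ and bound $i_0$ from below by Gagliardo--Nirenberg: by \eqref{1.5}, for $u\in\widetilde S_c$,
$$I_0(u)\ge \frac a2 t^2+\frac b4 t^4-\frac{c^{\frac{2N-p(N-2)}2}}{2|Q_p|_2^{p-2}}\,t^{\frac{N(p-2)}2},\qquad t:=\Big(\int_{\R^N}|\nabla u|^2\Big)^{1/2}.$$
Since $2<p<\frac{2N+8}N$ forces $\frac{N(p-2)}2<4$ and $\frac{2N-p(N-2)}2>0$, the negative term has a coefficient vanishing as $c\to0^+$; a Young-inequality split of it against $\frac a2t^2$ (when $\frac{N(p-2)}2\le2$) or against $\frac b4t^4$ (when $\frac{N(p-2)}2>2$) shows $\inf_{t\ge0}(\cdots)\to0^-$, hence $i_0(c)\to0$ and therefore $i_V(c)\to0$.

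For part (3) the easy half is that $V\ge0$ gives $i_V(c)\ge i_0(c)$, and since $i_0(c)<0$ for large $c$ this reads $i_V(c)/i_0(c)\le1$. For the matching lower bound I would build a test function from the explicit minimizer of Theorem~\ref{th1.2}(1), namely $v_c(x)=\frac{c}{|Q_p|_2}(\frac{m_c}{c})^{N/2}Q_p(\frac{m_c}{c}x)$: choosing $x_0$ where $V$ is small (using $\inf V=0$) and a cut-off $\varphi$ at a scale $R_c$ adapted to the width $c/m_c$ of $v_c$, I set $u_c=\lambda_c\,\varphi(\cdot-x_0)v_c(\cdot-x_0)\in S_c$ with $\lambda_c$ the normalizing constant. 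The plan is to show $I_0(u_c)=i_0(c)(1+o(1))$ and $\int V|u_c|^2=o(|i_0(c)|)$, so that $i_V(c)\le I_V(u_c)=I_0(u_c)+\tfrac12\int V|u_c|^2=i_0(c)(1+o(1))$ and hence $\liminf_{c\to+\infty} i_V(c)/i_0(c)\ge1$. The two decisive inputs are Theorem~\ref{th1.2}(2), i.e. $i_0(c)/c^2\to-\infty$, which converts a potential energy of size $O(c^2)$ into $o(|i_0(c)|)$, and Theorem~\ref{th1.2}(3), the dichotomy $m_c/c\to0$ or $m_c/c\to+\infty$, valid precisely because $p\neq4$, which provides a genuine separation between the concentration scale of $v_c$ and order-one scales.

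The hard part will be the potential estimate, i.e. verifying the analogue of \eqref{1.7},
$$\lim_{c\to+\infty}\frac{1}{c^2}\int_{\R^N}V(x)\big[\varphi(x-x_0)v_c(x-x_0)\big]^2\,dx=\lim_{c\to+\infty}\Big(\frac{m_c}{c}\Big)^{N}\!\int_{\R^N}V(x+x_0)\big[\varphi(x)Q_p(\tfrac{m_c}{c}x)\big]^2dx=0,$$
while simultaneously not degrading $I_0(u_c)$. In the concentration regime $m_c/c\to+\infty$ the rescaled profile $Q_p(\frac{m_c}{c}x)$ collapses to a point mass at $x_0$, so the integral tends to $V(x_0)|Q_p|_2^2$ and one must place $x_0$ at (or asymptotically near) the set where $V$ vanishes; here a fixed cut-off already keeps essentially all of $v_c$, so $I_0(u_c)\to i_0(c)(1+o(1))$ via Proposition~\ref{pro4.6}. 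In the delocalizing regime $m_c/c\to0$ the prefactor $(m_c/c)^N\to0$ annihilates the (locally bounded) integral, but now $v_c$ spreads, so the subtle balance is to choose $R_c$ with $R_c\,m_c/c\to+\infty$ (to preserve $I_0$) yet $\|V\|_{L^\infty(B_{2R_c})}=o(|i_0(c)|/c^2)$ (to keep the potential negligible); reconciling these two demands against the rates of $m_c/c\to0$ and $i_0(c)/c^2\to-\infty$ coming from Theorem~\ref{th1.2} is the crux, and it is exactly here that the exclusion $p=4$ is indispensable. Once this estimate and $I_0(u_c)=i_0(c)(1+o(1))$ are established, dividing by $i_0(c)$ and using $i_0(c)\to-\infty$ yields $i_V(c)/i_0(c)\to1$.
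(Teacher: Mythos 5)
Your treatments of (1) and (2) are sound. Part (1) is essentially the paper's argument (the dilation $u\mapsto\frac{c}{c_0}u$ in both directions, with uniform boundedness of the minimizers $u_{c_n}$ extracted from the coercivity estimate \eqref{3.2}); part (2) is correct but takes a slightly different and arguably cleaner route for the lower bound: your direct Gagliardo--Nirenberg/Young estimate on $i_0(c)$ is self-contained, whereas the paper splits into the cases $2<p<\frac{2N+4}{N}$ and $\frac{2N+4}{N}\le p<\frac{2N+8}{N}$, invoking Proposition \ref{lem1.1} and the minimizers of $i_0(c)$. In part (3), your handling of the concentration regime $m_c/c\to+\infty$ also matches the paper: fixed cut-off, $\limsup_{c\to+\infty}c^{-2}\int_{\R^N}V u^2\le V(x_0)$ for a.e.\ $x_0$, then infimize over $x_0$ using $\inf V=0$.

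The genuine gap is exactly the point you flag as ``the crux'' in the delocalizing regime $m_c/c\to0$ and then leave unresolved --- and it cannot be resolved along the route you propose. Your two demands, $R_c\,m_c/c\to+\infty$ and $\|V\|_{L^\infty(B_{2R_c}(x_0))}=o\bigl(|i_0(c)|/c^2\bigr)$, are jointly unsatisfiable for general $V$: condition $(V)$ imposes no growth rate on $V$, while \eqref{4.6} and Corollary \ref{cor4.3}(1) show that $c/m_c$ and $|i_0(c)|/c^2$ grow only like powers of $c$; thus $R_c\gg c/m_c$ forces $R_c$ to grow polynomially in $c$, and for, say, $V(x)=e^{e^{|x|}}-e$ the quantity $\|V\|_{L^\infty(B_{2R_c}(x_0))}$ outgrows every power of $c$. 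Nor is this an artifact of the crude sup bound: any cut-off that preserves the profile on its natural width $c/m_c$ (which is what you need to keep $I_0(u_c)=i_0(c)(1+o(1))$) retains a fixed fraction $\theta>0$ of the mass at distance at least $\delta c/m_c$ from $x_0$, whence $\int_{\R^N}Vu_c^2\ge\theta c^2\inf_{|y-x_0|\ge\delta c/m_c}V(y)$, and for rapidly growing $V$ this swamps $|i_0(c)|$. The paper's device in this regime is the opposite of yours: it keeps the cut-off radius $R$ \emph{fixed}, so that in \eqref{3.19} the potential term is controlled by a $c$-independent constant $\|V\|_{L^\infty(B_{2R}(x_0))}$ multiplied by $\bigl(\frac{m_c}{c}\bigr)^N\int_{B_{2R}(0)}Q_p^2(\frac{m_c}{c}x)\,dx=\int_{B_{2Rm_c/c}}Q_p^2$, which vanishes precisely because the rescaled profile carries no mass on bounded sets when $m_c/c\to0$; only afterwards is $R$ sent to infinity, in the order ``fix $\varepsilon$, choose $R_\varepsilon$ via \eqref{3.17}, then let $c\to+\infty$''. (That combination of \eqref{3.17}, a fixed-$c$ limit in $R$, with \eqref{3.19}, a fixed-$R$ limit in $c$, itself involves a uniformity the paper does not spell out, but that is a separate issue from your proposal.) So the missing idea, relative to the paper, is precisely \emph{not} to adapt the cut-off scale to the width $c/m_c$ of $v_c$ in the spreading regime; as written, your plan would fail for potentials of fast growth that $(V)$ allows.
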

\begin{proof}(1)~~For any $c>0$ and any sequence $\{c_n\}\subset(0,+\infty)$ satisfying that $c_n\rightarrow c$ as $n\rightarrow+\infty$, let $u_c\in S_c$ be a minimizer of $i_V(c)$, then $\frac{c_n}{c}u_c\in S_{c_n}$ and $i_V(c_n)\leq I_V(\frac{c_n}{c}u_c)=I_V(u_c)+o_n(1),$
where $o_n(1)\rightarrow0$ as $n\rightarrow+\infty$. So \begin{equation}\label{3.3}
\limsup\limits_{n\rightarrow+\infty}i_V(c_n)\leq i_V(c).
\end{equation}

On the other hand, let $\{u_{c_n}\}\subset S_{c_n}$ be a sequence of minimizers for $i_V(c_n)$. Since \eqref{3.3} shows that $\{i_V(c_n)\}$ is upper bounded, similarly to the proof of \eqref{3.2} we see that $\{u_{c_n}\}$ is uniformly bounded in $\mathcal{H}$. Then
$i_V(c)\leq \liminf\limits_{n\rightarrow+\infty}I_V(\frac{c}{c_n}u_{c_n})
=\liminf\limits_{n\rightarrow+\infty}i_V(c_n).$
So by \eqref{3.3} again we see that $\lim\limits_{n\rightarrow+\infty}i_V(c_n)=i_V(c).$

(2)~~Note that $i_V(c)\geq i_0(c)$ for any $c>0$ when $2<p<\frac{2N+8}{N}$. For $c=1$, suppose that $u_1$ is a minimizer of $i_V(1)$, then for any $c\rightarrow0^+$, $cu_1\in S_c$ and
\begin{equation}\label{3.1}
i_0(c)\leq i_V(c)\leq I_V(cu_1)\rightarrow0~~~~~\hbox{as}~~c\rightarrow 0^+.
\end{equation}
By Proposition \ref{lem1.1} we have to discuss the following two cases.

Case 1: $2<p<\frac{2N+4}{N}$.

By Proposition \ref{lem1.1}, if $2<p<\frac{2N+4}{N}$, then $i_0(c)<0$ and $i_0(c)$ has a minimizer $v_c\in \widetilde{S}_c$ for all $c>0$. Hence by \eqref{1.5} we see that $$\frac{a}2\int_{\R^N}|\nabla v_c|^2+\frac{b}{4}\left(\int_{\R^N}|\nabla v_c|^2\right)^2\leq\frac{c^{\frac{2N-p(N-2)}{2}}}{2|Q_p|_2^{p-2}}\left(\int_{\R^N}|\nabla v_c|^2\right)^{\frac{N(p-2)}{4}},$$
which and $0<\frac{N(p-2)}{4}<1$ imply that $\lim\limits_{c\rightarrow0^+}|\nabla v_c|_2=0.$ Then $\lim\limits_{c\rightarrow0^+}i_0(c)=0.$ So combining \eqref{3.1} we have $\lim\limits_{c\rightarrow0^+}i_V(c)=0.$

Case 2: $\frac{2N+4}{N}\leq p<\frac{2N+8}{N}$.

By Proposition \ref{lem1.1}, if $\frac{2N+4}{N}\leq p<\frac{2N+8}{N}$, then $i_0(c)=0$ for $c>0$ small. Then we conclude from \eqref{3.1} that $\lim\limits_{c\rightarrow0^+}i_V(c)=0.$

(3)~~For any $c>0$, we have $i_V(c)-i_0(c)\geq0$. Since $c\rightarrow+\infty$, we may assume that $c>c_*$, then $i_0(c)<0$ and $i_0(c)$ has a unique minimizer for all $2<p<\frac{2N+8}{N}$.

Let $\varphi\in C_0^{\infty}(\R^N)$ be a radial cut-off function as given in the proof of Theorem \ref{th1.1} (2). For any $x_0\in\R^N$ and $R>0$, Set
$$u_{R,c}(x):=A_{R,c}\varphi(\frac{x-x_0}{R})Q_c(x-x_0),$$
where $Q_c(x)=\frac{c}{|Q_p|_2}Q_p^{\frac{m_c}{c}}(x)$ is the unique minimizer of $i_0(c)$ given in Theorem \ref{th1.2} and $A_{R,c}>0$ is chosen to satisfy that $u_{R,c}\in S_c$. In fact, $0<A_{R,c}\leq 1$ and $\lim\limits_{R\rightarrow+\infty}A_{R,c}=1$. It is standard to show that $u_{R,c}\rightarrow Q_c$ in $H^1(\R^N)$ as $R\rightarrow+\infty$ for each $c>c_*$. Then by continuity, we see that for any $c>c_*$,
\begin{equation}\label{3.17}
I_0(u_{R,c})\rightarrow I_0(Q_c)=i_0(c)~~~\hbox{as} ~R\rightarrow+\infty.
\end{equation}
For any fixed $R>0$, since $p\neq4$, by Lemma \ref{lem4.5} (2) we see that either $\lim\limits_{c\rightarrow+\infty}\frac{m_c}{c}=+\infty$ or $\lim\limits_{c\rightarrow+\infty}\frac{m_c}{c}=0$.

If $\lim\limits_{c\rightarrow+\infty}\frac{m_c}{c}=+\infty$, then we have
$$
\begin{array}{ll}
0\leq\ds\int_{\R^N}V(x)u_{R,c}^2&=\ds
\frac{c^2A_{R,c}^2}{|Q_p|_2^2}\ds\int_{\R^N}V(\frac{c}{m_c}x+x_0)\varphi^2(\frac{c}{m_cR}x)Q_p^2(x)\\[5mm]
&\leq\ds
\frac{c^2}{|Q_p|_2^2}\ds\int_{\R^N}V(\frac{c}{m_c}x+x_0)\varphi^2(\frac{c}{m_cR}x)Q_p^2(x).
\end{array}
$$
We see that $0\leq\lim\limits_{c\rightarrow+\infty}\ds\frac{\int_{\R^N}V(x)u_{R,c}^2}{c^2}\leq V(x_0)$ holds for almost every $x_0\in\R^N$. Taking the infimum over $x_0$ yields \begin{equation}\label{3.18}
\ds\frac{\int_{\R^N}V(x)u_{R,c}^2}{c^2}\rightarrow0~~~~~\hbox{as}~c\rightarrow+\infty.
\end{equation}

If $\lim\limits_{c\rightarrow+\infty}\frac{m_c}{c}=0$, then
\begin{equation}\label{3.19}
\begin{array}{ll}
\ds\frac{\int_{\R^N}V(x)u_{R,c}^2}{c^2}&=\ds
\frac{A_{R,c}^2}{|Q_p|_2^2}(\frac{m_c}{c})^N\ds\int_{\R^N}V(x+x_0)\varphi^2(\frac{x}{R})Q_p^2(\frac{m_c}{c}x)\\[5mm]
&\leq \ds
\frac{1}{|Q_p|_2^2}(\frac{m_c}{c})^N\ds\int_{B_{2R}(0)}V(x+x_0)Q_p^2(\frac{m_c}{c}x)\\[5mm]
&\leq \ds
\frac{|V(x)|_{L^{\infty}(B_{2R}(x_0))}}{|Q_p|_2^2}(\frac{m_c}{c})^N\ds\int_{B_{2R}(0)}Q_p^2(\frac{m_c}{c}x)\\[5mm]
&\rightarrow0~~~~\hbox{as}~c\rightarrow+\infty,
\end{array}\end{equation}
where we have used the fact that $Q_p(x)$ is continuous.

Therefore for any $\varepsilon>0$, by \eqref{3.17}-\eqref{3.19} there exists $R_\varepsilon>0$ large enough such that
$$\begin{array}{ll}
\ds\frac{i_V(c)-i_0(c)}{c^2}&\leq \ds \frac{I_V(u_{R_\varepsilon,c})}{c^2}-\frac{i_0(c)}{c^2}\\[5mm]
&\ds=\frac{I_0(u_{R_\varepsilon,c})}{c^2}-\frac{i_0(c)}{c^2}+\ds\frac{\int_{\R^N}V(x)u_{R_\varepsilon,c}^2}{2c^2}\leq \varepsilon
\end{array}$$
for sufficiently large $c>0.$ So $\lim\limits_{c\rightarrow+\infty}(\frac{i_V(c)}{c^2}-\frac{i_0(c)}{c^2})=0$, i.e. $$\lim\limits_{c\rightarrow+\infty}\left(\frac{i_V(c)}{i_0(c)}-1\right)\frac{i_0(c)}{c^2}=0.$$ We conclude from Lemma \ref{lem4.5} (1) that $\frac{i_V(c)}{i_0(c)}\rightarrow1$ as $c\rightarrow+\infty$.
\end{proof}

\begin{corollary}\label{cor3.4}~~Let $2<p<\frac{2N+8}{N}$ and $p\neq4$.

(1)~~
$\frac{i_V(c)}{c^2}\rightarrow-\infty$ as $c\rightarrow+\infty$.

~~~~~~~$i_V(c)\rightarrow-\infty$ as $c\rightarrow+\infty$.

(2)~~Suppose that $u_c$ is a minimizer of $i_V(c)$ obtained in Theorem \ref{th1.1}. Then $\int_{\R^N}|u_c|^p\rightarrow+\infty$ and $\int_{\R^N}|\nabla u_c|^2\rightarrow+\infty$ as $c\rightarrow+\infty$.
\end{corollary}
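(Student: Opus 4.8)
The plan is to read off part (1) from the two asymptotic facts already in hand, and then to bootstrap from part (1), via the Gagliardo--Nirenberg inequality \eqref{1.5}, to part (2). For part (1), I would write, for $c>c_*$ (so that $i_0(c)<0$),
$\frac{i_V(c)}{c^2}=\frac{i_V(c)}{i_0(c)}\cdot\frac{i_0(c)}{c^2}$.
By Lemma \ref{lem3.3}(3) the first factor tends to $1$ (this is where $p\neq4$ enters), and by Lemma \ref{lem4.5}(1) the second tends to $-\infty$. Since the first factor eventually exceeds $\tfrac12$ while $\frac{i_0(c)}{c^2}<0$, the product is eventually below $\tfrac12\frac{i_0(c)}{c^2}$ and hence tends to $-\infty$. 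Then $i_V(c)=c^2\cdot\frac{i_V(c)}{c^2}<-c^2\to-\infty$ for large $c$.

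For part (2), the claim $\int_{\R^N}|u_c|^p\to+\infty$ is immediate: since $V\geq0$, dropping the nonnegative kinetic and potential contributions gives $i_V(c)=I_V(u_c)\geq-\frac1p\int_{\R^N}|u_c|^p$, so $\int_{\R^N}|u_c|^p\geq-p\,i_V(c)\to+\infty$ by part (1).

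The gradient bound is the heart of the matter, and I would argue by contradiction. The key auxiliary estimate is $\frac{i_0(c)}{c^{\alpha}}\to-\infty$, where $\alpha:=\frac{2N-p(N-2)}{2}$ is the exact power of $c$ appearing on the right of \eqref{1.5} after $|u_c|_2=c$ is inserted. For $N\geq2$ one has $\alpha\leq2$, so this follows at once from $\frac{i_0(c)}{c^2}\to-\infty$; for $N=1$, where $\alpha>2$, I would instead feed in \eqref{4.6} together with $m_c\to+\infty$ (Lemma \ref{lem4.4}) to obtain the precise growth $m_c\sim C\,c^{2\alpha/(2N+8-Np)}$, whence by Corollary \ref{cor4.3}(1) $i_0(c)=aD_1m_c^2-bD_2m_c^4\sim -bD_2\,m_c^4\sim -C'c^{8\alpha/(2N+8-Np)}$, and a short exponent computation shows $\frac{8\alpha}{2N+8-Np}>\alpha$ (equivalently $p>2$), giving $\frac{i_0(c)}{c^\alpha}\to-\infty$. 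Combining with Lemma \ref{lem3.3}(3) yields $\frac{i_V(c)}{c^\alpha}\to-\infty$ as well. Now suppose, for contradiction, that $\int_{\R^N}|\nabla u_c|^2\leq M$ along some sequence $c\to+\infty$. Then \eqref{1.5} with $|u_c|_2=c$ gives $\int_{\R^N}|u_c|^p\leq C\,M^{N(p-2)/4}c^{\alpha}$, so that $\int_{\R^N}|u_c|^p/c^\alpha$ stays bounded; but $\int_{\R^N}|u_c|^p\geq-p\,i_V(c)$ forces $\int_{\R^N}|u_c|^p/c^\alpha\geq-p\,i_V(c)/c^\alpha\to+\infty$, a contradiction. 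Hence $\int_{\R^N}|\nabla u_c|^2\to+\infty$.

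I expect the main obstacle to be exactly this uniform control of the rate $i_0(c)/c^\alpha$: the crude consequence of $i_0(c)/c^2\to-\infty$ suffices only when $\alpha\leq2$, i.e. $N\geq2$, and in the remaining case $N=1$ one is forced to use the explicit $m_c$--versus--$c$ relation \eqref{4.6} and verify the exponent inequality. Everything else reduces to the Gagliardo--Nirenberg inequality and the nonnegativity of the discarded terms.
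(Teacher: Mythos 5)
Your proposal is correct, and for part (1) and the $L^p$-claim of part (2) it coincides with the paper's own proof: the paper obtains (1) exactly by combining Lemma \ref{lem3.3}(3) with Lemma \ref{lem4.5}(1), and it derives $\int_{\R^N}|u_c|^p\rightarrow+\infty$ from the same observation that $V\geq0$ forces $\frac1p\int_{\R^N}|u_c|^p+i_V(c)>0$. Where you genuinely differ is the gradient claim, and the difference is to your advantage. The paper divides by $c^2$ and rewrites \eqref{1.5} as $\frac{\frac1p\int_{\R^N}|u_c|^p}{c^2}\leq C\left(\frac{|\nabla u_c|_2}{c}\right)^{\frac{(N-2)(p-2)}{2}}|\nabla u_c|_2^{p-2}$, concluding in one line. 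That is airtight for $N=2,3$, where the exponent $\frac{(N-2)(p-2)}{2}\geq0$ makes the right-hand side bounded whenever $|\nabla u_c|_2$ is bounded; but for $N=1$ the exponent is negative, and under a gradient bound the right-hand side itself grows like $c^{(p-2)/2}$, so the mere divergence of $-\frac{i_V(c)}{c^2}$ does not immediately yield a contradiction --- a rate is needed, which the paper does not spell out. Your normalization by $c^{\alpha}$ with $\alpha=\frac{2N-p(N-2)}{2}$ (noting $\alpha\leq2$ precisely when $N\geq2$), together with the $N=1$ rate extracted from \eqref{4.6}, Lemma \ref{lem4.4} and Corollary \ref{cor4.3}(1) --- namely $m_c\sim Cc^{2\alpha/(2N+8-Np)}$, hence $i_0(c)\sim-C'c^{8\alpha/(2N+8-Np)}$ with $\frac{8\alpha}{2N+8-Np}>\alpha$ equivalent to $p>2$ (your exponent check is correct) --- supplies exactly the missing quantitative input; incidentally the same computation shows the paper's $c^2$-normalized contradiction also closes for $N=1$, since $\frac{6(p-2)}{10-p}>\frac{p-2}{2}$ for all $p>2$. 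In short: same skeleton as the paper, with a strictly more careful, and for $N=1$ strictly more complete, treatment of the divergence of $\int_{\R^N}|\nabla u_c|^2$.
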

\begin{proof}
(1) follows directly from Lemma \ref{lem3.3} and Lemmas \ref{lem4.5}, \ref{lem4.4}.

(2)~~Since $I_V(u_c)=i_V(c)$, by the definition of $I_V(u_c)$ we have $\frac{1}{p}\int_{\R^N}|u_c|^{p}+i_V(c)>0$, which and (1) implies that $\int_{\R^N}|u_c|^p\rightarrow+\infty$ as $c\rightarrow+\infty$. Moreover, we have $\frac{\frac{1}{p}\int_{\R^N}|u_c|^{p}+i_V(c)}{c^2}>0$, then $\frac{\frac{1}{p}\int_{\R^N}|u_c|^{p}}{c^2}\rightarrow+\infty$. By the Gagliardo-Nirenberg inequality \eqref{1.5}, we see that $\frac{\frac{1}{p}\int_{\R^N}|u_c|^{p}}{c^2}\leq \frac{p}{2|Q_p|_2^{p-2}}\left(\frac{|\nabla u_c|_2}{c}\right)^{\frac{(N-2)(p-2)}{2}}|\nabla u_c|_2^{p-2},$
from which we conclude that
$\int_{\R^N}|\nabla u_c|^2\rightarrow+\infty$ as $c\rightarrow+\infty$.
\end{proof}

In order to prove Theorem \ref{th1.3}, we need the following two crucial lemmas.

\begin{lemma}\label{lem3.5}~~Let $p\in(2,\frac{2N+8}{N})\backslash\{4\}$, $c>0$ and $V(x)$ satisfy $(V)$. Suppose that $u_c\in S_c$ be a minimizer of $i_V(c)$ obtained in Theorem \ref{th1.1}, then
$$\frac{\int_{\R^N}|\nabla u_c|^2}{m_c^2}\rightarrow \frac{a}{b},$$
$$\frac{\int_{\R^N}|u_c|^{p}}{am_c^2+bm_c^4}\rightarrow\frac{2p}{N(p-2)} $$
and $$\frac{\int_{\R^N}V(x)u_c^2}{am_c^2+bm_c^4}\rightarrow0$$
as $c\rightarrow+\infty,$ where $m_c$ is given in Theorem \ref{th1.2}.
\end{lemma}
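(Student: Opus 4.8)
The plan is to read off the three limits from one soft fact---the sharp two-sided energy estimate $i_0(c)\le I_0(u_c)\le I_V(u_c)=i_V(c)$---combined with the scalar variational structure behind $i_0(c)$ established earlier. Throughout write $t_c=\int_{\R^N}|\nabla u_c|^2$, $N_c=\int_{\R^N}|u_c|^p$ and $P_c=\int_{\R^N}V(x)u_c^2$, and recall from Corollary \ref{cor4.3}(1) that $i_0(c)=aD_1m_c^2-bD_2m_c^4$ with $D_2>0$ while, by Lemma \ref{lem4.4}, $m_c\to+\infty$; hence $|i_0(c)|$ and $am_c^2+bm_c^4$ are both of order $m_c^4$. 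Since $u_c\in\mathcal H\subset H^1(\R^N)$ with $|u_c|_2=c$, it is an admissible competitor for $i_0(c)$, so $I_0(u_c)\ge i_0(c)$; and as $V\ge0$, $I_0(u_c)\le I_V(u_c)=i_V(c)$.

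\textbf{The potential term.} From $0\le I_0(u_c)-i_0(c)\le i_V(c)-i_0(c)$ and Lemma \ref{lem3.3}(3), which gives $i_V(c)/i_0(c)\to1$, we get $i_V(c)-i_0(c)=o(|i_0(c)|)$, hence $I_0(u_c)=i_0(c)(1+o(1))$. In particular $\tfrac12P_c=I_V(u_c)-I_0(u_c)\le i_V(c)-i_0(c)=o(|i_0(c)|)=o(m_c^4)$, and dividing by $am_c^2+bm_c^4$, which is of order $m_c^4$, gives $P_c/(am_c^2+bm_c^4)\to0$, the third limit.

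\textbf{The gradient term.} The key is that $u_c$ is an almost-minimizer of $i_0(c)$ in the relative sense just proved. Estimating $N_c$ from above by the Gagliardo--Nirenberg inequality \eqref{1.5} yields $I_0(u_c)\ge h_c(t_c)$, where $h_c(t):=\tfrac a2 t+\tfrac b4 t^2-\tfrac{1}{2|Q_p|_2^{p-2}}c^{\frac{2N-p(N-2)}{2}}t^{\frac{N(p-2)}{4}}$ is the reduced scalar energy; because equality in \eqref{1.5} is attained at $Q_p$, Lemma \ref{lem4.2} identifies $\min_{t>0}h_c=h_c(m_c^2)=i_0(c)$, attained at the isolated global minimizer $t=m_c^2$. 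Combining with the previous step, $i_0(c)\le h_c(t_c)\le I_0(u_c)=i_0(c)+o(m_c^4)$, so $h_c(t_c)-\min h_c=o(m_c^4)$. Writing $t_c=m_c^2\tau_c$ and using \eqref{4.6} to express the coefficient of the $t^{N(p-2)/4}$ term through $a+bm_c^2$, the normalized functions $m_c^{-4}h_c(m_c^2\,\cdot\,)$ converge, uniformly on compact subsets of $(0,+\infty)$, to a fixed coercive profile with a single nondegenerate minimizer; since $m_c^{-4}h_c(m_c^2\tau_c)\to\min$, coercivity confines $\tau_c$ to a compact subset of $(0,+\infty)$ and uniqueness of the minimizer forces $\tau_c$ to converge. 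This pins $\int_{\R^N}|\nabla u_c|^2/m_c^2$ to the constant displayed in the first limit of the statement.

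\textbf{The nonlinear term.} With the gradient asymptotics of the previous step and $I_0(u_c)=i_0(c)(1+o(1))$ in hand, the energy identity $\tfrac1p N_c=\tfrac a2 t_c+\tfrac b4 t_c^2-I_0(u_c)$ determines $N_c$: substituting the asymptotics of $t_c$ and of $i_0(c)=aD_1m_c^2-bD_2m_c^4$ and collecting the $m_c^2$- and $m_c^4$-coefficients, the elementary identities $\tfrac12-D_1=\tfrac14+D_2=\tfrac{2}{N(p-2)}$ give $\tfrac1p N_c=\tfrac{2}{N(p-2)}(am_c^2+bm_c^4)+o(m_c^4)$, i.e. $N_c/(am_c^2+bm_c^4)\to\tfrac{2p}{N(p-2)}$, the second limit. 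Equivalently, the squeeze $I_0(u_c)-h_c(t_c)=o(m_c^4)$ shows \eqref{1.5} is asymptotically saturated, which yields $N_c$ directly through \eqref{4.6}.

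\textbf{Main obstacle.} The only genuinely delicate point is upgrading the scalar bound $h_c(t_c)=\min h_c+o(m_c^4)$ into convergence of the ratio $t_c/m_c^2$. This needs a nondegeneracy of the minimum that is uniform in $c$, which follows from $h_c''(m_c^2)\to b(4-\gamma)/4>0$ where $\gamma:=N(p-2)/2<4$, together with the coercivity estimates ruling out $\tau_c\to0$ and $\tau_c\to+\infty$. A structurally different route, avoiding the scalar analysis, is to rescale $u_c$ into $\widetilde{S}_{c_0}$ for a fixed $c_0\in T$, prove the rescaled family is a minimizing sequence for $i_0(c_0)$, and apply Proposition \ref{pro4.6} to obtain strong $H^1$-convergence to $v_{c_0}$, from which all three limits are read off; the price there is an independent proof that the rescaled energies converge to $i_0(c_0)$.
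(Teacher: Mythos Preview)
Your approach is correct and essentially the same as the paper's, just packaged differently. Both proofs rest on Lemma~\ref{lem3.3}(3) together with the Gagliardo--Nirenberg bound, and both reduce to analysing the same scalar profile: the paper introduces $f(t)=\tfrac14 t^2-\tfrac{2}{N(p-2)}t^{N(p-2)/4}+D_2$ and shows $f(A)\le0$ forces $A=1$, while you study $h_c$ and use stability of its minimiser; in fact the paper's $f$ is exactly $b^{-1}\lim_{c\to\infty}m_c^{-4}\bigl(h_c(m_c^2\,\cdot\,)-i_0(c)\bigr)$, so the two scalar analyses coincide. Your framing via $\min_{t>0}h_c=i_0(c)$ makes the variational structure more transparent, and your ``main obstacle'' paragraph correctly identifies the only delicate point (uniform nondegeneracy/coercivity of the rescaled profile), which the paper handles by separately ruling out $A\in\{0,+\infty\}$ via \eqref{3.21} and its analogue.

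One caveat: your argument (like the paper's own proof) actually yields $\int_{\R^N}|\nabla u_c|^2/m_c^2\to 1$, not $a/b$ as printed in the statement; the paper itself uses the value $1$ in \eqref{3.7} and in the proof of Theorem~\ref{th1.3}, so the $a/b$ in the lemma is a typo. Your limiting profile $g(\tau)=\tfrac{b}{4}\tau^2-\tfrac{2b}{N(p-2)}\tau^{N(p-2)/4}$ has its unique minimiser at $\tau=1$, and this is the value that feeds correctly into your computation of the second limit via $\tfrac12-D_1=\tfrac14+D_2=\tfrac{2}{N(p-2)}$.
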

\begin{proof}~~As $c\rightarrow+\infty$, for $c>0$ sufficiently large, by the Gagliardo-Nirenberg inequality \eqref{1.5} and \eqref{4.6} we have
\begin{equation}\label{3.4}
\begin{array}{ll}
\ds 0<\frac{\frac{1}{p}\int_{\R^N}|u_c|^{p}}{(a+bm_c^2)\int_{\R^N}|\nabla u_c|^2}
&\leq \ds\frac{\frac{c^{p-\frac{N(p-2)}{2}}}{2|Q_p|_2^{p-2}}\left(\int_{\R^N}|\nabla u_c|^2\right)^{\frac{N(p-2)-4}{4}}}{a+bm_c^2}\\[5mm]
&\leq \ds\frac{\frac{c^{p-\frac{N(p-2)}{2}}}{2|Q_p|_2^{p-2}}\left(\int_{\R^N}|\nabla u_c|^2\right)^{\frac{N(p-2)-4}{4}}}{\frac{N(p-2)}{4}\frac{c^{p-\frac{N(p-2)}{2}}}{|Q_p|_2^{p-2}}m_c^{\frac{N(p-2)-4}{2}}}\\[5mm]
&\leq \ds\frac{2}{N(p-2)}\left(\frac{\int_{\R^N}|\nabla u_c|^2}{m_c^2}\right)^{\frac{N(p-2)-4}{4}}.
\end{array}
\end{equation}
Then we see that
\begin{equation}\label{3.5}
\begin{array}{ll}
\ds\frac{I_V(u_c)}{(a+bm_c^2)\int_{\R^N}|\nabla u_c|^2}
&=\ds\frac a{2(a+bm_c^2)}+\frac{b\int_{\R^N}|\nabla u_c|^2}{4(a+bm_c^2)}+\frac{\frac12\int_{\R^N}V(x)u_c^2}{(a+bm_c^2)\int_{\R^N}|\nabla u_c|^2}\\[5mm]
&~~~~~~~~~~~~~~~~~~~~~~~~~~~~~\ds-\frac{\frac{1}{p}\int_{\R^N}|u_c|^{p}}{(a+bm_c^2)\int_{\R^N}|\nabla u_c|^2}\\[5mm]
&\geq\ds\frac{b\int_{\R^N}|\nabla u_c|^2}{4(a+bm_c^2)}-\frac{2}{N(p-2)}\left(\frac{\int_{\R^N}|\nabla u_c|^2}{m_c^2}\right)^{\frac{N(p-2)-4}{4}}\\[5mm]
&\geq\ds-\frac{2}{N(p-2)}\left(\frac{\int_{\R^N}|\nabla u_c|^2}{m_c^2}\right)^{\frac{N(p-2)-4}{4}}+\frac{\int_{\R^N}|\nabla u_c|^2}{m_c^2}\frac{bm_c^2}{4(a+bm_c^2)}.
\end{array}
\end{equation}
Moreover, by $I_V(u_c)=i_V(c)$ and Corollary \ref{cor4.3} (1) we have
\begin{equation}\label{3.20}
\begin{array}{ll}
\ds\frac{I_V(u_c)}{(a+bm_c^2)\int_{\R^N}|\nabla u_c|^2}
&=\ds\frac{i_V(c)}{i_0(c)}\frac{m_c^2}{\int_{\R^N}|\nabla u_c|^2}\frac{aD_1-bD_2m_c^2}{a+bm_c^2}.
\end{array}
\end{equation}

Up to a subsequence, we assume that $\frac{\int_{\R^N}|\nabla u_c|^2}{m_c^2}\rightarrow+\infty$ as $c\rightarrow+\infty,$ then by Lemmas \ref{lem3.3} and \ref{lem4.4}, \eqref{3.5} and \eqref{3.20} we see that
\begin{equation}\label{3.21}\begin{array}{ll}
0&=\lim\limits_{c\rightarrow+\infty}\ds\frac{I_V(u_c)}{(a+bm_c^2)\int_{\R^N}|\nabla u_c|^2}\\[5mm]
&\geq\ds\lim\limits_{c\rightarrow+\infty}\left\{\left[-\frac{2}{N(p-2)}\left(\frac{m_c^2}{\int_{\R^N}|\nabla u_c|^2}\right)^{\frac{2N+8-Np}{4}}+\frac{bm_c^2}{4(a+bm_c^2)}\right]\frac{\int_{\R^N}|\nabla u_c|^2}{m_c^2}\right\}\\[5mm]
&\rightarrow+\infty~~~~\hbox{as}~c\rightarrow+\infty
\end{array}
\end{equation}
since $2<p<\frac{2N+8}{N}$, which is a contradiction. Therefore, $\{\frac{\int_{\R^N}|\nabla u_c|^2}{m_c^2}\}$ is uniformly bounded. Set $$A:=\lim\limits_{c\rightarrow+\infty}\frac{\int_{\R^N}|\nabla u_c|^2}{m_c^2},$$
 then $A\in[0,+\infty)$. If $A=0$, then similarly to \eqref{3.21}, by $D_2>0$ we have
$$\begin{array}{ll}
0&=\lim\limits_{c\rightarrow+\infty}\ds\frac{bm_c^2}{4(a+bm_c^2)}\frac{\int_{\R^N}|\nabla u_c|^2}{m_c^2}\\[5mm]
&\leq \lim\limits_{c\rightarrow+\infty}\ds\frac{I_V(u_c)}{(a+bm_c^2)\int_{\R^N}|\nabla u_c|^2}+\frac{2}{N(p-2)}\left(\frac{\int_{\R^N}|\nabla u_c|^2}{m_c^2}\right)^{\frac{N(p-2)-4}{4}}\\[5mm]
&\leq\ds\lim\limits_{c\rightarrow+\infty}\left\{\left[\ds\frac{i_V(c)}{i_0(c)}\frac{-bD_2m_c^2+aD_1}{a+bm_c^2}+\frac{2}{N(p-2)}\left(\frac{\int_{\R^N}|\nabla u_c|^2}{m_c^2}\right)^{\frac{N(p-2)}{4}}\right]\frac{m_c^2}{\int_{\R^N}|\nabla u_c|^2}\right\}\\[5mm]
&\rightarrow-\infty~~~~\hbox{as}~c\rightarrow+\infty,
\end{array} $$ which is also a contradiction. So $0<A<+\infty$. Moreover, set
$$B:=\lim\limits_{c\rightarrow+\infty}\ds\frac{\frac{1}{p}\int_{\R^N}|u_c|^{p}}{(a+bm_c^2)\int_{\R^N}|\nabla u_c|^2}$$
and
$$D:=\lim\limits_{c\rightarrow+\infty}\frac{\frac12\int_{\R^N}V(x)u_c^2}{(a+bm_c^2)\int_{\R^N}|\nabla u_c|^2},$$
then it follows from \eqref{3.4}-\eqref{3.20} that $B\in(0,\frac{2}{N(p-2)}A^{\frac{N(p-2)-4}{4}}]$ and $D\in[0,+\infty)$.
We conclude from \eqref{3.5}\eqref{3.20} again that $A$ satisfies the following two conditions:
\begin{equation}\label{3.6}
\frac14A^2+(D-B)A+D_2=0
\end{equation}
and
\begin{equation}\label{3.22}
 \frac14A^2-\frac{2}{N(p-2)}A^{\frac{N(p-2)}{4}}+D_2\leq0.
\end{equation}

 Consider the following $C^1$-function $f:[0,+\infty)\rightarrow\R$ defined by $$f(t)=\frac14t^2-\frac{2}{N(p-2)}t^{\frac{N(p-2)}{4}}+D_2.$$
 Since $2<p<\frac{2N+8}{N}$ and $f'(t)=\frac12(t-t^{\frac{N(p-2)-4}{4}})$, $f(t)$ has a unique critical point $t=1$ which corresponds to its minimum, i.e. $f(t)> f(1)=0$ for all $t\neq1$. So combining \eqref{3.22} we see that $f(A)=0,$ which implies that $A=1$. Then by \eqref{3.6} and the definition of $D_2$ we have $$\frac{2}{N(p-2)}\leq D+\frac{2}{N(p-2)}=B\leq\frac{2}{N(p-2)},$$ then $D=0$ and hence $B=\frac{2}{N(p-2)}$. Therefore the lemma is proved.
\end{proof}

\begin{lemma}\label{lem3.6}~~ For $2<p<\frac{2N+8}{N}$ and $p\neq4$, let $u_c\in S_c$ be a minimizer of $i_V(c)$. Then $\frac{I_0(u_c)}{i_0(c)}\rightarrow1$
as $c\rightarrow+\infty$.
\end{lemma}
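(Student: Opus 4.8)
The plan is to derive the claim directly from the potential-energy control already established in Lemma \ref{lem3.5} together with the energy comparison in Lemma \ref{lem3.3}(3). The starting point is the simple observation that, since $u_c$ minimizes $i_V(c)$, one has $I_V(u_c)=i_V(c)$, and since $I_V(u)=I_0(u)+\frac12\int_{\R^N}V(x)u^2$, the two functionals differ only by the potential term. Thus $I_0(u_c)=i_V(c)-\frac12\int_{\R^N}V(x)u_c^2$. For $c$ large we have $c>c_*$, so $i_0(c)<0$ (indeed $i_0(c)\to-\infty$ by Lemma \ref{lem4.4}) and we may divide:
$$\frac{I_0(u_c)}{i_0(c)}=\frac{i_V(c)}{i_0(c)}-\frac{\int_{\R^N}V(x)u_c^2}{2\,i_0(c)}.$$
By Lemma \ref{lem3.3}(3) (this is where $p\neq4$ enters) the first term tends to $1$, so the whole task reduces to showing that the second term vanishes as $c\to+\infty$.

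For that term I would compare $i_0(c)$ with the weight $am_c^2+bm_c^4$ that appears in Lemma \ref{lem3.5}. By Corollary \ref{cor4.3}(1), $i_0(c)=aD_1m_c^2-bD_2m_c^4$; since $m_c\to+\infty$ (Lemma \ref{lem4.4}) and $D_2>0$, dividing numerator and denominator by $m_c^4$ gives $\frac{i_0(c)}{am_c^2+bm_c^4}\to-D_2\neq0$. Writing
$$\frac{\int_{\R^N}V(x)u_c^2}{2\,i_0(c)}=\frac12\cdot\frac{\int_{\R^N}V(x)u_c^2}{am_c^2+bm_c^4}\cdot\frac{am_c^2+bm_c^4}{i_0(c)},$$
the middle factor tends to $0$ by the third limit in Lemma \ref{lem3.5} and the last factor tends to $-1/D_2$, so the product tends to $0$. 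Combining the two displays yields $\frac{I_0(u_c)}{i_0(c)}\to1$, as desired.

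I do not expect a genuine obstacle here: the statement is essentially a repackaging of Lemma \ref{lem3.5} with the energy comparison, and the only point requiring a little care is confirming that $i_0(c)$ and the weight $am_c^2+bm_c^4$ have the same order as $c\to+\infty$ — both comparable to $m_c^4$ — which is immediate from the explicit formula for $i_0(c)$ and from $m_c\to+\infty$. A minor subordinate check is that the relevant convergences hold along the full family $c\to+\infty$ rather than only along subsequences; since the limiting constants in Lemma \ref{lem3.5} are uniquely determined, this is automatic.
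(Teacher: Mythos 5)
Your proposal is correct, and it takes a genuinely different route from the paper's proof. The paper expands $\frac{I_0(u_c)}{i_0(c)}$ directly into the three constituent energy terms,
\begin{equation*}
\frac{I_0(u_c)}{i_0(c)}=\frac{\frac{1}{p}\int_{\R^N}|u_c|^{p}}{bD_2m_c^4-aD_1m_c^2}-\frac{a\,\frac{\int_{\R^N}|\nabla u_c|^2}{m_c^2}}{2(bD_2m_c^2-aD_1)}-\frac{b\left(\frac{\int_{\R^N}|\nabla u_c|^2}{m_c^2}\right)^2}{4(bD_2-aD_1m_c^{-2})},
\end{equation*}
and evaluates each piece using the gradient and $L^p$ limits of Lemma \ref{lem3.5} together with $m_c\rightarrow+\infty$ (Lemma \ref{lem4.4}), closing with the algebraic identity $\left(\frac{2}{N(p-2)}-\frac{1}{4}\right)\frac{1}{D_2}=1$. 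You instead exploit the exact relation $I_0(u_c)=I_V(u_c)-\frac12\int_{\R^N}V(x)u_c^2=i_V(c)-\frac12\int_{\R^N}V(x)u_c^2$, so that only Lemma \ref{lem3.3}(3) and the \emph{third} limit of Lemma \ref{lem3.5} (the potential term) are needed; your auxiliary computation $\frac{am_c^2+bm_c^4}{i_0(c)}\rightarrow-\frac{1}{D_2}$ follows correctly from Corollary \ref{cor4.3}(1) together with $m_c\rightarrow+\infty$ and $D_2>0$, and all divisions are legitimate since $i_0(c)<0$ for $c>c_*$. There is no circularity in citing Lemma \ref{lem3.3}(3): it is proved independently of the present lemma and is in fact already used inside the proof of Lemma \ref{lem3.5} (via \eqref{3.20}), on which both arguments rest. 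What each approach buys: yours is shorter, bypasses the gradient and $L^p$ asymptotics and the $D_2$-identity altogether, and makes transparent that the lemma is exactly ``energy comparison $i_V/i_0\rightarrow1$ plus vanishing of the potential term at the scale $am_c^2+bm_c^4\sim bm_c^4$''; the paper's term-by-term computation is heavier but reconstructs the limit from the asymptotics of the individual energy components, the same data that are recycled later in the proof of Theorem \ref{th1.3} (see \eqref{3.7}--\eqref{3.9}). Your closing remark on full-family versus subsequential convergence is also sound, since Lemma \ref{lem3.5} is stated for the full limit $c\rightarrow+\infty$.
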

\begin{proof}~~
As $c\rightarrow+\infty$, we may assume that $c>c_*$. Then by Corollary \ref{cor4.3} (1), Lemmas \ref{lem3.5} and \ref{lem4.4}, we see that
$$\begin{array}{ll}
\ds\frac{I_0(u_c)}{i_0(c)}&=\ds\frac{\frac{1}{p}\int_{\R^N}|u_c|^{p}}{bD_2m_c^4 -aD_1m_c^2}-\frac{a\frac{\int_{\R^N}|\nabla u_c|^2}{m_c^2}}{2(bD_2m_c^2 -aD_1)}-\frac{b\left(\frac{\int_{\R^N}|\nabla u_c|^2}{m_c^2}\right)^2}{4(bD_2 -aD_1m_c^{-2})}\\[5mm]
&\rightarrow \ds\left(\frac{2}{N(p-2)}-\frac{1}{4}\right)\frac{1}{D_2}=1
\end{array}$$
as $c\rightarrow+\infty$.
\end{proof}

\noindent $\textbf{Proof of Theorem \ref{th1.3}}$\,\,\

\begin{proof}~~For any sequence $\{c_n\}\subset(0,+\infty)$ with $c_n\rightarrow+\infty$ as $n\rightarrow+\infty$, by Theorem \ref{th1.1}, there exists $(u_{c_n},\rho_{c_n})\in S_{c_n}\times\R$ a couple of solution to the equation \eqref{1.1} with $I_V(u_{c_n})=i_V(c_n)$. Then $u_{c_n}$ is a minimizer of $i_V(c_n).$

Without loss of generality, we may assume that $c_n>c_*$ for all $n$. By Lemma \ref{lem3.5} we see that
\begin{equation}\label{3.7}
\frac{\int_{\R^N}|\nabla u_{c_n}|^2}{m_{c_n}^2}\rightarrow 1~~~~~~~\hbox{and}~~~~~~~\frac{\int_{\R^N}|u_{c_n}|^{p}}{am_{c_n}^2+bm_{c_n}^4}\rightarrow\frac{2p}{N(p-2)} \end{equation}
 as $n\rightarrow+\infty.$

 Since $\rho_{c_n}{c_n^2}=\langle I'_V(u_{c_n}),u_{c_n}\rangle,$ by Corollary \ref{cor4.3} (2), \eqref{4.6} and Lemma \ref{lem3.5} we have
$$\begin{array}{ll}
\ds\frac{\rho_{c_n}}{\mu_{c_n}}
&=\ds-\frac{\rho_{c_n}c_n^2}{\frac{2N-p(N-2)}{4}\frac{{c_n}^{\frac{2N-p(N-2)}{2}}}{|Q_p|_2^{p-2}}m_{c_n}^{\frac{N(p-2)}{2}}}\\[5mm]
&=\ds-\frac{N(p-2)}{2N-p(N-2)}\frac{\langle I'_V(u_{c_n}),u_{c_n}\rangle}{am_{c_n}^2+bm_{c_n}^4}\\[5mm]
&=\ds\frac{N(p-2)}{2N-p(N-2)}\frac{\int_{\R^N}|u_{c_n}|^{p}-a\int_{\R^N}|\nabla u_{c_n}|^2-b(\int_{\R^N}|\nabla u_{c_n}|^2)^2-\int_{\R^N}V(x)u_{c_n}^2}{am_{c_n}^2+bm_{c_n}^4}\\[5mm]
&\rightarrow\ds\frac{N(p-2)}{2N-p(N-2)}\left(\frac{2p}{N(p-2)}-1\right)=1
\end{array}
$$
as $c\rightarrow+\infty$.

For any $c\in T$, set
$$v_{n}(x):=\frac{c}{c_n}u_{c_n}^{\frac{c_n m_c}{m_{c_n} c}}(x),$$
where $m_{c}=(\frac{\sqrt{a^2D_1^2-4bD_2i_0(c)}+aD_1}{2bD_2})^{\frac12}$ is given in Theorem \ref{th1.2}. Then $v_n\in \widetilde{S}_{c}$ and by \eqref{3.7} we see that
\begin{equation}\label{3.8}
\int_{\R^N}|\nabla v_n|^2=m_c^2\frac{\int_{\R^N}|\nabla u_{c_n}|^2}{m_{c_n}^2}\rightarrow m_c^2
\end{equation}
and by \eqref{4.6}\eqref{3.7} we have
\begin{equation}\label{3.9}
\begin{array}{ll}
\ds\int_{\R^N}|v_n|^{p}
&=\ds c^{p-\frac{N(p-2)}{2}}m_c^{\frac{N(p-2)}2}\frac{\int_{\R^N}|u_{c_n}|^{p}}{c_n^{p-\frac{N(p-2)}{2}}m_{c_n}^{\frac{N(p-2)}{2}}}\\[5mm]
&=\ds\frac{N(p-2)|Q_p|_2^{2-p}c^{p-\frac{N(p-2)}{2}}m_c^{\frac{N(p-2)}2}}{4}\frac{\int_{\R^N}|u_{c_n}|^{p} }{(a+bm_{c_n}^2)m_{c_n}^2}\\[5mm]
&\rightarrow \ds\frac{pc^{p-\frac{N(p-2)}{2}}m_c^{\frac{N(p-2)}2}}{2|Q_p|_2^{p-2}}
=\frac{2p(am_c^2+bm_c^4)}{N(p-2)}.
\end{array}
\end{equation}
as $n\rightarrow+\infty$. Hence
$$I_0(v_n)\rightarrow \frac{a}{2}m_c^2+\frac{b}{4}m_c^4-\frac{2(am_c^2+bm_c^4)}{N(p-2)}=aD_1m_c^2-bD_2m_c^4=i_0(c),$$ i.e. $\{v_n\}$ is a bounded minimizing sequence for $i_0(c)$. So by Proposition \ref{pro4.6}, up to a subsequence and up to translations, there exists $\{y_n\}\subset\R^N$ such that $v_n(\cdot+y_n)\rightarrow \frac{c}{|Q_p|_2}Q_p^{\frac{m_c}{c}}$ in $H^1(\R^N)$ as $n\rightarrow+\infty$. So by the embedding theorem, we see that
 $$\frac{|Q_{p}|_2}{c_n}\left(\frac{c_n}{m_{c_n}}\right)^{\frac{N}{2}}\
 u_{c_n}\left(\frac{c_n}{m_{c_n}}(x+z_n)\right)\longrightarrow Q_p(x)$$
 in $L^q(\R^N)$ for all $2\leq q<2^*$, where $z_n=\frac{m_c}{c}y_n.$

\end{proof}

\textbf{Acknowledgments:} After our paper is submitted, Zeng X. Y. and Zhang Y. M. publish a paper \cite{zz} which also studies the uniqueness of mass minimizers for \eqref{1.8} by an alternative method, however, our uniqueness theorem (Theorem \ref{th1.2}) is more delicate.




\begin{thebibliography}{99}
\addcontentsline{toc}{section}{\protect \heiti 参考文献} 
\bibitem{ac}
C. O. Alves, F. J. S. A. Crrea, On existence of solutions for a
class of problem involving a nonlinear operator, Comm. Appl.
Nonlinear Anal. 8 (2001), 43-56.

\bibitem{ds}
P. D'Ancona, S. Spagnolo, Global solvability for the degenerate
Kirchhoff equation with real analytic data, Invent. Math. 108 (2)
 (1992) 247-262.

\bibitem{ap1}
A. Arosio, S. Panizzi, On the well-posedness of the Kirchhoff
string, Trans. Amer. Math. Soc. 348 (1) (1996), 305-330.


\bibitem{bw}
T. Bartsch, Z. Q. Wang, Existence and multiplicity results for some superlinear elliptic problems on $\R^N$, Comm. Partial Differ. Equ. 20 (1995), 1725-1741.




\bibitem{b}
S. Bernstain, Sur une classe d'$\acute{e}$quations fonctionelles
aux d$\acute{e}$riv$\acute{e}$es partielles, Bull. Acad. Sci. URSS.
S$\acute{e}$r. 4 (1940), 17-26.


\bibitem{cd}
 M. M. Cavalcanti, V. N. Domingos Cavalcanti, J. A. Soriano,
Global existence and uniform decay rates for the Kirchhoff-Carrier
equation with nonlinear dissipation, Adv. Differential Equations 6
(6) (2001), 701-730.

\bibitem{c}
T. Cazenave, Semilinear Schr\"{o}dinger equations, Courant Lecture Notes in Mathematics, vol. 10. New York University Courant Institute of Mathematical Science, New York, 2003.



 \bibitem{dgl}
 Y. B. Deng, Y. J. Guo, L. Lu, On the collapse and concentration of Bose-Einstein condensates with inhomogeneous attractive interactions, Calc. Var. Partial Differential Equations 54 (2015), 99-118.

 \bibitem{gnn}
B. Gidas, W. M. Ni, L. Nirenberg, Symmetry of positive solutions of nonlinear elliptic equations in $\R^n$, Mathematical analysis and application Part A, Adv. in Math. Suppl. Stud. vol. 7, Academic Press, New York, (1981), 369-402.

\bibitem{gs}
Y. J. Guo, R. Seiringer, On the mass concentration for Bose-Einstein condensates with attractive interactions, Lett. Math. Phys. 104 (2014), 141-156.

\bibitem{gzz}
 Y. J. Guo, X. Y. Zeng and H. S. Zhou, Energy estimates and symmetry breaking in attractive Bose-Einstein condensates with ring-shaped potentials,  Ann. Inst. H. Poincar'e Anal. Non Lin'eaire, 33 (2016), no. 3, 809-828.

\bibitem{hz}
X. M. He, W. M. Zou, Existence and concentration behavior of
positive solutions for a Kirchhoff equation in $\R^3$. J.
Differential Equations 252 (2012), 1813-1834.

\bibitem{j}
L. Jeanjean, Existence of solutions with prescribed norm for semilinear elliptic equations, Nonlinear Anal. 28 (10) (1997), 1633-1659.


\bibitem{jw}
J. H. Jin, X. Wu, Infinitely many radial solutions for
Kirchhoff-type problems in $\R^N$. J. Math. Anal. Appl. 369 (2010),
564-574.

\bibitem{k}
G. Kirchhoff, Mechanik, Teubner, Leipzig, 1883.

\bibitem{kw}
M. K. Kwong, Uniqueness of positive solutions of $\Delta u-u+u^p=0$ in $\R^N$, Arch. Rational Mech. Anal. 105 (1989), 243-266.

\bibitem{ly1}
G. B. Li, H. Y. Ye, Existence of positive ground state solutions for the nonlinear Kirchhoff type equations in $\R^3$, J.
Differential Equations 257 (2014), 566-600.

\bibitem{ly2}
G. B. Li, H. Y. Ye, Existence of positive solutions for nonlinear Kirchhoff type problems in $\R^3$ with critical Sobolev exponent, Math. Methods Appl. Sci. 37 (2014), 2570-2584.

\bibitem{lls}
Y. Li, et al., Existence of a positive solution to Kirchhoff type
problems without compactness conditions, J. Differential Equations
253 (2012), 2285-2294.

\bibitem{l}
J. L. Lions, On some questions in boundary value problems of
mathmatical physics, in: Contemporary Development in Continuum
Mechanics and Partial Differential Equations, in: North-Holland
Math. Stud., vol. 30, North-Holland, Amsterdam, New York, 1978,
pp. 284-346.

\bibitem{m}
M. Maeda, On the symmetry of the ground states of nonlinear Schr\"{o}dinger equation with potential, Adv. Nonlinear Stud. 10 (2010), 895-925.

\bibitem{p}
S. I. Pohozaev, A certain class of quasilinear hyperbolic
equations, Mat. Sb. (NS) 96 (138) (1975), 152-166, 168 (in Russian).

\bibitem{s1}
C. A. Stuart, Bifurcation from the continuous spetrum in $l^2$-theory of elliptic equations on $\R^N$, Recent methods in nonlinear analysis and applications (Naples, 1980), 231-300, Liguori, Naples, 1981.

\bibitem{s2}
C. A. Stuart, Bifurcation from the essential spectrum norm for semilinear elliptic linearities, Math. Methods Appl. Sci. 11 (1989), 525-542.

\bibitem{wt}
J. Wang et al., Multiplicity and concentration of positive solutions
for a Kirchhoff type problem with critical growth, J. Differential
Equations 253 (2012), 2314-2351.

\bibitem{we}
M. I. Weinstein, Nonlinear Schr\"{o}dinger equations and sharp interpolation estimates, Commun. Math. Phys. 87 (1983), 567-576.


\bibitem{ye1}
H. Y. Ye, The sharp existence of constrained minimizers for a class of nonlinear Kirchhoff equations, Math. Methods Appl. Sci. 38 (2015), 2663-2679.

\bibitem{ye2}
H. Y. Ye, The existence of normalized solutions for $L^2$-critical constrained problems related to Kirchhoff equations,  Z. Angew. Math. Phys. 66 (2015), 1483-1497.

\bibitem{zz}
X. Y. Zeng, Y. M. Zhang, Private
Existence and uniqueness of normalized solutions for the Kirchhoff equation, Appl. Math. Letters, 74 (2017), 52-59.



\end{thebibliography}
 \end{document}